\pgfplotsset{compat=1.15}
\newtheorem{theorem}{Theorem}[section]
\newtheorem{definition}[theorem]{Definition}
\newtheorem{lemma}[theorem]{Lemma}
\newtheorem{proposition}[theorem]{Proposition}
\newtheorem{problem}[theorem]{Problem}
\theoremstyle{remark}
\newtheorem{remark}[theorem]{Remark}
\DeclareMathOperator{\Int}{Int}
\DeclareMathOperator{\conv}{conv}
\newcommand\R{\mathbb{R}}
\newcommand{\St}{{\mathcal St}}
        \newcommand{\eps}{\varepsilon}
        \newcommand{\HH}{{\mathcal H}}
        \renewcommand{\H}{\HH^1}
        \newcommand{\defeq}{:=}
        \newcommand{\forget}[1]{}
        \def\dist{\mathrm{dist}\,}
        \def\X{\mathrm{X}}
        \def\E{\mathrm{E}}
        \def\S{\mathcal S}
        \def\Int{\mathrm{Int}\,}
\begin{document}

\title{On minimizers of the maximal distance functional for a planar convex closed smooth curve}










\author{ D.D. Cherkashin$^\mathrm{a}$,~
 A.S. Gordeev$^{\mathrm{b,c}}$, G.A. Strukov$^{\mathrm{b,c}}$, Y.I. Teplitskaya$^\mathrm{a}$\\
{\small ~a. Chebyshev Laboratory, St. Petersburg State University, 14th Line V.O., 29B, Saint Petersburg 199178 Russia}\\
{\small ~b. St.~Petersburg Department of V.~A.~Steklov Institute of Mathematics of the Russian Academy of Sciences}\\
{\small ~c. The Euler International Mathematical Institute, St. Petersburg, Russia}
\date{}
}

\maketitle

\begin{abstract}
Fix a compact $M \subset \mathbb{R}^2$ and $r>0$.
A \textit{minimizer} of the maximal distance functional is a connected set $\Sigma$ of the minimal length, such that 
\[
\max_{y \in M} \dist(y,\Sigma) \leq r.
\]
The problem of finding maximal distance minimizers is connected to the Steiner tree problem.

In this paper we consider the case of a convex closed curve $M$, with the minimal radius of curvature greater than $r$ (it implies that $M$ is smooth). 
The first part is devoted to statements on structure of $\Sigma$: we show that the closure of an arbitrary connected component of $B_r(M) \cap \Sigma$ is a local Steiner tree which connects no more than five vertices.

In the second part we ``derive in the picture''. Assume that the left and right neighborhoods of $y \in M$ are contained in $r$-neighborhoods of different points $x_1$, $x_2 \in \Sigma$. We write conditions on the behavior of $\Sigma$ in the neighborhoods of $x_1$ and $x_2$ under the assumption by moving $y$ along $M$.
\end{abstract}

\section{Introduction}
For a given compact set $M \subset \mathbb{R}^2$ consider the maximal distance functional
\[
F_{M}(\Sigma)\defeq \max _{y\in M}\dist (y, \Sigma),
\]
where $\Sigma$ is a compact planar set, and $\dist(y, \Sigma)$ stands for the Euclidean distance between $y$ and $\Sigma$. Also $F_M(\emptyset) := \infty$. 

Consider the class of closed connected
sets $\Sigma \subset \R^2$ such that $F_M(\Sigma) \leq r$ for a given $r > 0$. We are interested in the properties of sets of minimal
length (one-dimensional Hausdorff measure) $\H(\Sigma)$ over the mentioned class. Further we call such sets \emph{minimizers}.

It is known that the set of minimizers is non-empty. It is also known that every minimizer $\Sigma$ of positive length satisfies $F_M(\Sigma)=r$. 
Also in this case the set of minimizers coincides with the set of solutions of the corresponding dual problem: to minimize $F_M$ among the class of closed connected sets $\Sigma \subset \mathbb{R}^2$ with the prescribed bound on the length $\H(\Sigma) \leq l$ (that is the reason of calling the desired set minimizers of maximal distance functional). General statements and details of the mentioned results can be found in~\cite{mir}.

Let $B_r (x)$ be the open ball of radius $r$ centered at a point $x$. Let $B_r(M)$ be the open $r$-neighborhood of $M$:
\[
B_r(M) \defeq \bigcup_{x\in M} B_r(x).
\]

\subsection{Properties of \texorpdfstring{$\Sigma$}{Sigma} for general \texorpdfstring{$M$}{M}}

In this subsection $M$ is an arbitrary planar compact.

Note that $\Sigma$ is bounded (and hence compact), since $\Sigma \subset \overline{B_r(\conv M)}$, where $\conv M$ stands for the convex hull of $M$. 
 
\begin{definition} 
A point $x \in \Sigma$ is called energetic, if for all $\rho > 0$ the set $\Sigma \setminus B_\rho(x)$ does not cover $M$ i.e.
\[
\dist(M, \Sigma \setminus B_\rho(x)) > r.
\]
Denote the set of energetic points by $G_\Sigma$.
\end{definition}

Every minimizer $\Sigma$ can be split into three disjoint subsets:
\[
        \Sigma=\E_{\Sigma}\sqcup\X_{\Sigma}\sqcup\S_{\Sigma},
\]
where $X_\Sigma \subset G_\Sigma$ is the set of \textit{isolated energetic} points (i.e. every $x \in X_\Sigma$ is energetic and there is a $\rho > 0$ such that $B_\rho(x) \cap G_\Sigma = \{x\}$), $E_\Sigma := G_\Sigma \setminus X_\Sigma$ is the set of \textit{non-isolated energetic}
points and $S_\Sigma := \Sigma \ G_\Sigma$ is the set of non-energetic points also called the \textit{Steiner part} of $\Sigma$.

The following basic properties of minimizers has been proved in~\cite{mir} (for planar $M$) and in~\cite{PaoSte04max} (for $M \subset \mathbb{R}^n$):
        
\begin{itemize}
    \item[(a)] minimizers contain no cycles (homeomorphic images of circumference). 
    \label{a}
    \item[(b)] For every energetic $x \in G_\Sigma$ there is a point $y \in M$, such that $|x-y|=r$ and $B_{r}(y)\cap \Sigma=\emptyset$. 
    Further we call $y$ \textit{corresponding} to $x$ and denote by $y(x)$. Note that a corresponding point may be not unique.
    \label{b}
    \item[(c)] For every non-energetic $x \in S_\Sigma$ there is an $\varepsilon > 0$, such that $\Sigma \cap B_{\varepsilon}(x)$ is either a segment or a \textit{regular tripod}, i.e. the union of three line segments with an endpoint in $x$ and relative angles of $2\pi/3$.
    \label{Sbasis}
    \end{itemize}

\begin{theorem}[Teplitskaya,~\cite{teplitskaya2018regularity,teplitskaya2019regularity}]
Let $\Sigma$ be a maximal distance minimizer for a compact set $M \subset \mathbb{R}^2$, $r > 0$. We say that the ray $ (ax] $ is a \textit{tangent ray} of the set $ \Sigma $ at the point $ x\in \Sigma $ if there exists non  stabilized sequence of points $ x_k \in \Sigma $ such that $ x_k \rightarrow x $ and $ \angle x_kxa \rightarrow 0 $. Then \begin{itemize}
\item[(i)] $\Sigma$ is a union of a finite number of injective images of the segment $[0,1]$;
\item[(ii)] the angle between each pair of tangent rays at every point of $\Sigma$ is greater or equal to $2\pi/3$;
\item[(iii)] the number of tangent rays at every point of $\Sigma$ is not greater than $3$. If it is equal to $3$, then there exists such a neighbourhood of $x$ that the arcs in it coincide with line segments and the pairwise angles between them are equal to $2\pi/3$. 
\end{itemize}
\end{theorem}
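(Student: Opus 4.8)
The plan is to reduce all three assertions to the local structure of $\Sigma$ near a point $x$, separating the energetic and non-energetic cases and using the three basic properties (a)--(c) together with the minimality of $\Sigma$. For (i), note first that $\Sigma$ is compact, connected, of finite length, and (by (a)) contains no cycles, hence is a topological tree; it therefore suffices to bound the number of endpoints, since a tree with finitely many endpoints has finitely many points of degree $\ge 3$ and so decomposes into finitely many injective arcs. Every endpoint is energetic: otherwise, at a non-energetic endpoint the pendant sub-arc lying in a small ball could be excised without uncovering $M$ and without disconnecting $\Sigma$, contradicting minimality. To bound the number of such endpoints I would use density estimates $c\rho \le \H(\Sigma \cap B_\rho(x)) \le C\rho$, valid near every point of $\Sigma$ at all small scales: the lower bound follows because $\Sigma$ is connected and must reach $\partial B_\rho(x)$, the upper bound from a standard radial-projection comparison with the minimality; combined with the energetic structure of endpoints and a Vitali-type separation argument, this prevents endpoints (and branch points) from accumulating. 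This step is routine but somewhat technical.

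For (ii), fix $x \in \Sigma$ and two tangent rays at $x$. If $x$ is non-energetic we are done by (c): the rays arise from a segment (angle $\pi$) or from a regular tripod (mutual angles $2\pi/3$). So suppose $x$ is energetic and put $y_0 := y(x)$, so $|x - y_0| = r$ and $B_r(y_0) \cap \Sigma = \emptyset$ by (b). I would first record the half-plane observation: any $x_k \in \Sigma$ with $x_k \to x$ satisfies $|x_k - y_0|^2 \ge r^2$, i.e.\ $\langle x_k - x,\, x - y_0\rangle \ge -\tfrac12 |x_k - x|^2$; dividing by $|x_k - x|$ and letting $k \to \infty$ shows that every tangent ray at $x$ makes an angle $\le \pi/2$ with the outward normal $x - y_0$, so all tangent rays at $x$ lie in one closed half-plane through $x$. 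Now assume two of them make an angle $\alpha < 2\pi/3$. Choose points $p_1, p_2$ on the two corresponding arcs at small arc-length $s$ from $x$, remove the two sub-arcs joining $x$ to $p_1$ and to $p_2$, and add a tree through the Fermat point of the triangle $p_1 x p_2$ (keeping $x$ itself in the set). Since $\angle p_1 x p_2 \to \alpha < 2\pi/3$ as $s \to 0$, for small $s$ this Fermat tree is strictly shorter than the path $p_1 x p_2$, whose length is itself at most the total length of the two removed sub-arcs. The resulting set $\Sigma'$ is connected (it still joins $x$, $p_1$, $p_2$ and hence the rest of $\Sigma$), lies in $\overline{B_r(\conv M)}$, and is strictly shorter than $\Sigma$. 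If in addition $F_M(\Sigma') \le r$, this contradicts minimality, giving (ii).

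Verifying $F_M(\Sigma') \le r$ is the step I expect to be the main obstacle. Keeping $x \in \Sigma'$ preserves coverage of $y_0$ and of every corresponding point of $x$. The danger is an $m \in M$ that in $\Sigma$ was covered only by the excised arcs $A \subset B_{O(s)}(x)$; minimality then forces $\dist(m, \Sigma) = r$ with all nearest points of $\Sigma$ to $m$ lying in $\overline{B_{O(s)}(x)}$, so $|m - x| \to r$ as $s \to 0$, yet a naive estimate only gives $\dist(m, \Sigma') \le r + O(s)$. To close this gap I would use the splitting $\Sigma = E_\Sigma \sqcup X_\Sigma \sqcup S_\Sigma$. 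If $x \in X_\Sigma$ is an isolated energetic point, then for $s$ small $A$ contains no energetic point other than $x$, and one argues that such an $m$ must actually be covered by $x$ itself: a point of $S_\Sigma$ that is the unique nearest point of some $m$ at distance exactly $r$ would be energetic, and the case of several nearest points clustered near $x$ is absorbed because at small scale each arc of $\Sigma$ stays within $o(s)$ of its chord (a consequence of the angle bound and of the finite turn of $\Sigma$), so the covering deficit of $\Sigma'$ is $o(s)$ while the length gain is $\Theta(s)$ and one passes to the limit in $s$. The non-isolated energetic set $E_\Sigma$ is the genuinely delicate part and needs a dedicated argument — working at a density point of $E_\Sigma$ and again playing an $o(s)$ covering deficit against a definite length saving, or invoking the finer known structure of $E_\Sigma$. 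Either way one reaches a contradiction with minimality.

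Finally, (iii) follows by combining the above. At a non-energetic point the number of tangent rays is at most $3$ by (c). At an energetic point, the half-plane observation together with (ii) forbids three rays with pairwise angles $\ge 2\pi/3$ inside a single closed half-plane, so the number of tangent rays there is at most $2$. Hence there are at most $3$ tangent rays at every point. If there are exactly $3$ at $x$, then $x$ cannot be energetic (the half-plane bound would cap the count at $2$), so (c) applies and the only possibility consistent with three tangent rays is the regular tripod: a neighbourhood of $x$ in which $\Sigma$ consists of three segments issuing from $x$ at mutual angles $2\pi/3$, as claimed.
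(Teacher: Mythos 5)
First, a point of order: the paper does not prove this theorem at all --- it is quoted as a known result from the two cited Teplitskaya references --- so there is no in-paper argument to compare yours against; what follows measures your sketch against what such a proof actually requires.

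Your overall strategy (the half-plane observation at energetic points, a Fermat/Steiner surgery for the $2\pi/3$ bound, property (c) at non-energetic points, and the counting argument deducing (iii) from (ii)) is the right road map, and the deduction of (iii) is correct as far as it goes. But there are three genuine gaps. For (i), the finiteness is not ``routine but somewhat technical'': a compact connected set of finite $\H$-measure with no cycles can have infinitely many endpoints and branch points accumulating at a point (a ``broom'' with summable bristle lengths), and two-sided density bounds $c\rho\le\H(\Sigma\cap B_\rho(x))\le C\rho$ do not exclude this, since each additional bristle costs arbitrarily little length. The actual proof must use that every endpoint and every extra tangent direction is energetic and hence comes with a free ball $B_r(y)$, and then separate these balls; this is the bulk of the cited papers and is absent from your sketch. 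For (ii), the verification $F_M(\Sigma')\le r$ is exactly where the difficulty lives, and you explicitly leave the non-isolated energetic case open; worse, your ``each arc stays within $o(s)$ of its chord'' step appeals to the finite turn of $\Sigma$, which is itself downstream of the regularity theorem being proved, so as written that step is circular. Finally, excising the two sub-arcs from $x$ to $p_1$ and $p_2$ preserves connectedness only if no other branch of $\Sigma$ attaches to their interiors, and ruling that out for small $s$ again presupposes the local finiteness asserted in (i). In short: a correct outline, but each of the places you flag as ``technical'' or ``needing a dedicated argument'' is precisely where the content of the theorem sits.
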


\subsection{The class of \texorpdfstring{$M$}{M}, considered in the paper}
\label{setup}

Fix a positive real $r$ and a closed convex curve $M$ with the minimal radius of curvature $R > r$ (this implies $C^{1,1}$-smoothness of $M$).
Introduce the notation: $N := \conv (M)$; let $M_r$ be the inner part of the boundary of $B_r(M)$, and finally put $N_r = \conv(M_r)$.
Note that $M_r$ also is a closed convex curve $M$ with the minimal radius of curvature $R - r$.
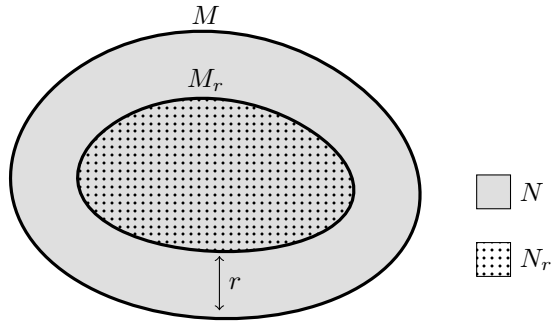
\begin{figure}[h]
    \centering
    \begin{tikzpicture}[scale=0.9]
    \fill[lightgray!50] plot [smooth cycle, tension=1] coordinates 
        {(0,0) (0,3) (4,3) (4.5, 0)};
    \draw[very thick] plot [smooth cycle, tension=1] coordinates 
        {(0,0) (0,3) (4,3) (4.5, 0)};
    \draw[very thick, pattern=dots] plot [smooth cycle, tension=1] coordinates 
        {(0.7,0.7) (0.7,2.3) (3.3,2.3) (3.8, 0.7)};
    \draw[<->] (2.2,-0.5) -- (2.2,0.3);
        
    \node[above] at (2, 3.6) {$M$};
    \node[above] at (2, 2.6) {$M_r$};
    \node[right] at (2.2, -0.1) {$r$};
    
    \draw[fill = lightgray!50] (6,1) rectangle (6.5,1.5);
    \draw[pattern=dots] (6,0) rectangle (6.5,0.5);
    \node[right] at (6.5, 1.25) {$N$};
    \node[right] at (6.5, 0.25) {$N_r$};
\end{tikzpicture}

    \caption{Definition of $N$, $M_r$ and $N_r$}
    \label{MMrNNr}
\end{figure}

Further $\Sigma$ denotes an arbitrary minimizer for $M$.

\subsection{The problem for particular \texorpdfstring{$M$}{M}}

Finding the set of minimizers for almost every particular $M$ is quite difficult. There are the following results.

\begin{figure}[h]
	\begin{center}
		\definecolor{yqqqyq}{rgb}{0.,0.,1.}
		\definecolor{xdxdff}{rgb}{0.,0.,0.}
		\definecolor{ffqqqq}{rgb}{0.,0.,1.}
    \definecolor{ttzzqq}{rgb}{0.,0.,0.}
    \begin{tikzpicture}[line cap=round,line join=round,>=triangle 45,x=1.0cm,y=1.0cm]
    \clip(-5.754809259201689,-2.15198488708635626) rectangle (0.492320066152345,2.2268250892462824);
    \draw [very thick, color=ttzzqq] (-3.,2.)-- (-2.,2.);
    \draw [very thick, color=ttzzqq] (-2.,-2.)-- (-3.,-2.);
    \draw [very thick, color=ffqqqq] (-3.,-1.)-- (-2.,-1.);
    \draw [dotted] (-3.,1.)-- (-2.,1.);
    \draw [shift={(-3.,0.)},very thick, color=ttzzqq]  plot[domain=1.5707963267948966:4.71238898038469,variable=\t]({1.*2.*cos(\t r)+0.*2.*sin(\t r)},{0.*2.*cos(\t r)+1.*2.*sin(\t r)});
    \draw [shift={(-3.,0.)},dotted]  plot[domain=1.5707963267948966:4.71238898038469,variable=\t]({1.*1.*cos(\t r)+0.*1.*sin(\t r)},{0.*1.*cos(\t r)+1.*1.*sin(\t r)});
    \draw [shift={(-2.,0.)},dotted]  plot[domain=-1.5707963267948966:1.5707963267948966,variable=\t]({1.*1.*cos(\t r)+0.*1.*sin(\t r)},{0.*1.*cos(\t r)+1.*1.*sin(\t r)});
    \draw [shift={(-2.,0.)},very thick,color=ttzzqq]  plot[domain=-1.5707963267948966:1.5707963267948966,variable=\t]({1.*2.*cos(\t r)+0.*2.*sin(\t r)},{0.*2.*cos(\t r)+1.*2.*sin(\t r)});
    \draw [dash pattern=on 2pt off 2pt] (-2.5008100281931047,2.) circle (1.cm);
    \draw [very thick,color=yqqqyq] (-3.7309420990521054,0.6824394829091469)-- (-3.1832495111022516,1.2690579009478955);
    \draw [very thick,color=yqqqyq] (-1.8178907146013357,1.2695061868000874)-- (-1.2695061868000874,0.6829193135917684);
    \draw [shift={(-2.,0.)},very thick,color=ffqqqq]  plot[domain=-1.5707963267948966:0.7517515639553677,variable=\t]({1.*1.*cos(\t r)+0.*1.*sin(\t r)},{0.*1.*cos(\t r)+1.*1.*sin(\t r)});
    \draw [shift={(-3.,0.)},very thick,color=ffqqqq]  plot[domain=2.390497746093128:4.771687465439039,variable=\t]({1.*1.*cos(\t r)+0.*1.*sin(\t r)},{0.*1.*cos(\t r)+1.*1.*sin(\t r)});
    \begin{scriptsize}
    \draw[color=ttzzqq] (-4.8089822121844977,0.2025724503290166) node {$M$};
    \draw[] (-3.763965847825129,-0.1818662773850787) node {$\Sigma$};
    \draw[color=black] (-2.059620164509451,0.8028001329511188) node {$M_r$};
    \end{scriptsize}
    \end{tikzpicture}
    \caption{A horseshow}
    \label{horseshoe}
	\end{center}
    \end{figure}
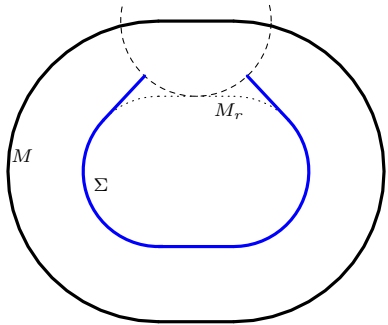

\begin{theorem}[Cherkashin -- Teplitskaya, 2018~\cite{cherkashin2018horseshoe}]
Let $r$ be a positive real, $M$ be a convex closed curve with the radius of curvature at least $5r$ at every point, $\Sigma$ be an arbitrary minimizer for $M$. Then $\Sigma$ is a union of an arc of $M_r$ and two segments, that are tangent to $M_r$ at the ends of the arc (so-called horseshoe, see Fig.~\ref{horseshoe}). 
In the case when $M$ is a circumference with radius $R$, the claim is true for $R > 4.98r$.
\label{horseshoeT}
\end{theorem}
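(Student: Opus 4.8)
The plan is, in a sequence of rigidity steps, to force a minimizer $\Sigma$ to be a simple arc made of an arc of $M_r$ flanked by two straight segments tangent to $M_r$, and then to use the curvature hypothesis to check that this horseshoe is feasible and globally shortest; the two numerical thresholds come from that last comparison. \textbf{Step 1 (localization and a length bound).} First I would show $\Sigma\subseteq\overline N$: the nearest-point projection $\pi_N$ onto the convex body $N$ is $1$-Lipschitz, keeps $\Sigma$ connected, fixes $M$, does not increase $\H$, and preserves $F_M\le r$ (because $|\pi_N(x)-y|\le|x-y|$ for every $y\in N\supseteq M$), so minimality gives $\Sigma=\pi_N(\Sigma)\subseteq\overline N$. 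By the quoted regularity theorem (together with property (a)), $\Sigma$ is a finite embedded tree whose leaves are energetic and whose branch points have order $3$ with $2\pi/3$ angles. Since $\Sigma$ must lie within $r$ of the convex loop $M$, a winding argument — following, as $y$ runs once around $M$, a nearest point $x(y)\in\Sigma$ and controlling its variation along $\Sigma$, after projecting onto $N_r$ to compare with $M_r$ — yields a lower bound $\H(\Sigma)\ge\H(M_r)-c\,r$. This already shows $\Sigma$ winds almost all the way around $N_r$, and it is where I expect the first real difficulty: making the winding argument rigorous and sharp enough.

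\textbf{Step 2 (topology and the $M_r$-part).} Using minimality I would rule out branch points: a branch carries an extra arm whose length is bounded below by the geometry of a $2\pi/3$-junction but which $r$-covers only a bounded extra arc of $M$, and this never pays against the length bound of Step 1 once $R>5r$; hence $\Sigma$ is a simple arc with two energetic endpoints $a,b$, each with a corresponding $y(a),y(b)\in M$ at distance exactly $r$ and with $B_r(y(a))\cap\Sigma=B_r(y(b))\cap\Sigma=\emptyset$. On this arc, a maximal non-energetic sub-arc is a straight Steiner segment (property (c) forbids it to turn without branching), while on a maximal energetic sub-arc an interior point $x$ lies on $\partial B_r(y(x))$ with $\Sigma$ avoiding the open disk $B_r(y(x))$; were such a sub-arc straight, tangency of one line to all the disks $B_r(y(x))$ would put the centres $y(x)$ on a line, contradicting strict convexity of $M$, so energetic sub-arcs are curved, and being squeezed between $M_r$ — the envelope of the disks they avoid and touch — and those disks, they lie on $M_r$. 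Finally one shows there is exactly one energetic block and two end-segments: a Steiner piece dipping into $\Int N_r$ would leave points of $M$ behind it at distance $>r$ from everything that remains, and extra alternations would again lose to the plain horseshoe by the Step 1 accounting. So $\Sigma$ is (segment)--(arc of $M_r$)--(segment).

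\textbf{Step 3 (tangency, feasibility, and the constants).} At each junction $p_i\in M_r$ between the arc and an end-segment, a first-variation argument — sliding $p_i$ along $M_r$ and trading segment length against arc length under the covering constraint, equivalently observing that a corner at $p_i$ bending towards $N_r$ could be cut to shorten $\Sigma$ — forces the segment to be tangent to $M_r$ at $p_i$; this is exactly the horseshoe. It then remains to check that, when $R>5r$, the two tangent segments stay inside $\overline N$ and reach far enough that $\Sigma$ does $r$-cover all of $M$, including the gap arc opposite the junctions, and that no admissible competitor is shorter — a horseshoe with a differently placed gap, or anything with a branch or an interior chord, being excluded by Steps 1--2 and a length count. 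For $M$ a circle of radius $R$ the whole comparison is symmetric and reduces to an explicit one-variable inequality which holds precisely for $R>4.98\,r$, whereas the general convex case loses a little, forcing the weaker assumption $R>5r$.

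\textbf{Main obstacle.} I expect the hard parts to be the rigidity in Step 2 — that $\Sigma$ has no branching and that its energetic bulk is literally an arc of $M_r$, not merely a curve near it — together with the sharp length bookkeeping that produces the constants $5$ and $4.98$; all of this rests on the lower curvature bound, which is exactly what stops ``corner-cutting'' competitors (shorter but losing coverage, since $M$ has no corners) from beating the horseshoe.
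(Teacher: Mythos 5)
This theorem is not proved in the present paper at all: it is quoted from~\cite{cherkashin2018horseshoe}, and the only machinery from that proof which is recalled here is Lemma~\ref{3points}, i.e.\ a decomposition of $\Sigma$ into connected components of $\Sigma\setminus N_r$, each shown to be a small local Steiner tree with at most two energetic points. Your plan takes a different, more global route (a winding lower bound plus rigidity), and as written it has genuine gaps rather than merely unexecuted routine steps. First, the Step~1 bound $\H(\Sigma)\ge\H(M_r)-cr$ is the load-bearing estimate for everything that follows (it is what is supposed to kill branch points and extra alternations), but the ``winding argument'' is only gestured at: a nearest-point selection $y\mapsto x(y)$ on a tree is neither continuous nor monotone along $\Sigma$, and without an explicit constant $c$ the comparisons in Steps~2--3 cannot be carried out. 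Second, your argument that the energetic bulk lies \emph{on} $M_r$ does not work as stated: an energetic point $x$ satisfies $|x-y(x)|=r$ for one corresponding $y(x)\in M$ with $B_r(y(x))\cap\Sigma=\emptyset$, but $x$ need not be at distance $r$ from $M$, i.e.\ it can lie in $B_r(M)\setminus M_r$ --- indeed Section~\ref{diff} of this paper is devoted precisely to such points, and Lemma~\ref{3points} allows components outside $N_r$ carrying one or two energetic points off $M_r$. So ``squeezed between $M_r$ and the avoided disks'' does not force the energetic sub-arc onto $M_r$; this rigidity is exactly the hard content of the theorem and needs a real argument (in~\cite{cherkashin2018horseshoe} it comes from the component-by-component Steiner analysis, not from an envelope picture).

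Third, the constants are asserted, not derived: Step~3 says the circle case ``reduces to an explicit one-variable inequality which holds precisely for $R>4.98r$'' without producing the inequality, and the claim that tangency at the junctions follows from a first variation is plausible but is the same kind of computation the paper performs in Section~\ref{diff} (Cases 1--4) and must actually be done, since a corner at the junction interacts with the covering constraint, not only with length. Until the lower bound of Step~1 is proved with an explicit constant, the on-$M_r$ rigidity of Step~2 is established, and the final length comparison is computed, this is a reasonable research plan but not a proof.
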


We prepare the paper with the following theorem.

\begin{theorem}
Let $M = A_1A_2A_3A_4$ be a rectangle, $0 < r < r_0(M)$. Then a maximal distance minimizer has the following topology, depicted in the left part of Fig.~\ref{rectangle}. The middle part of the picture contains enlarged fragment of the minimizer near $A_1$; the labeled angles are equal to $\frac{2\pi}{3}$. The rightmost part contains much more enlarged fragment of minimizer near $A_1$.

A minimizer consists of 21 segments; an approximation of the length of a minimizer is $Per - 8.473981r$, where $Per$ is the perimeter of the rectangle.
\label{rectangleT}
\end{theorem}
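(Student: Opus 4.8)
The plan is to pin down $\Sigma$ locally — along the four open sides and near the four vertices of the rectangle — and then to reconstruct the global picture using that $\Sigma$ is connected and acyclic. The only inputs are the structural facts valid for an arbitrary planar compact: $\Sigma$ is a finite union of injective arcs, contains no cycle, has every angle between tangent rays at least $2\pi/3$ with at most three tangent rays at any point, its non-energetic points are interior points of segments or centres of regular tripods (property~(c)), every energetic $x$ carries a corresponding $y(x)\in M$ with $|x-y(x)|=r$ and $B_r(y(x))\cap\Sigma=\emptyset$ (property~(b)), and $\Sigma\subset\overline{B_r(\conv M)}$.

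First, the sides. Fix a side $[A_1A_2]$ and a point $y$ in its relative interior at distance $\gg r$ from both endpoints. Every point of $\Sigma$ lying within $r$ of $y$ is confined to a thin lens about $y$; sliding $y$ along the side and invoking minimality forces $\Sigma$, over the middle portion of each side, to reduce to a single segment parallel to that side and at distance exactly $r$ from it, i.e. lying on the corresponding side of $M_r$. This is the rectangle analogue of the ``segment tangent to $M_r$'' in the horseshoe of Theorem~\ref{horseshoeT}, with the same proof. Consequently, once $r$ is small enough, all nontrivial structure of $\Sigma$ is concentrated in four pairwise disjoint neighbourhoods of the corners — and this is precisely where the restriction $r<r_0(M)$ enters, since for larger $r$ the four corner pieces would start to interact (and $M_r$ might degenerate).

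Next, the corners, which are the heart of the argument. Normalise so that $A_1$ sits at the origin with its incident sides along the positive axes. Covering the vertex $A_1$ forces $\Sigma$ to meet $\overline{B_r(A_1)}$, and by property~(b) it can touch this disc only along the circle $\partial B_r(A_1)$, staying outside the open disc; covering the corner quadrant of $M$ forces $\Sigma$ to connect this contact to the two side-segments obtained above, without creating a cycle, with all angles $\ge 2\pi/3$, with each interior branch point a $2\pi/3$ tripod, and with each kink an energetic point. One then enumerates the finitely many local skeleta compatible with these constraints — each a small local Steiner tree with one or two Steiner points and one or two pendant energetic ``teeth'' aimed at the corner, joining at most five marked points (the two connections to the side-segments, the contact(s) with $\partial B_r(A_1)$, and the branch point(s)) — parametrises each skeleton by the positions of its Steiner points and of its contact point on $\partial B_r(A_1)$, minimises its length subject to the covering constraint, and compares. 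All candidates other than the configuration of Fig.~\ref{rectangle} are eliminated as strictly longer.

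Finally, the global assembly and the length. Because $\Sigma$ is acyclic, the four corner pieces cannot be joined into a loop by the four side-segments: exactly one side-connection must be broken, and there the two neighbouring corner pieces must instead terminate in free energetic ends covering the middle of that side — exactly the role of the two tangent segments in the horseshoe. Comparing the four choices of which side to open, and checking that no substantially different layout is shorter, fixes the topology of Fig.~\ref{rectangle} and the count of $21$ segments. Summing the optimal contributions gives $\H(\Sigma)=Per-c\,r$, where $c$ is determined by the stationarity conditions — the usual $2\pi/3$ Steiner relations together with a reflection/tangency condition at each contact with $\partial B_r(A_1)$ — a transcendental system with no closed-form solution, whence $c=8.473981\ldots$ numerically; the inequality $c>8$ simply records that the optimal corner-plus-opening structure is shorter than tracing $M_r$, whose length is $Per-8r$. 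The main obstacle is, as always for explicit-minimiser theorems, the bookkeeping of this case analysis: showing that the list of admissible local corner skeleta is complete for $r<r_0(M)$, and that every admissible configuration other than the advertised one — including near-misses differing only in which branch carries a tooth, or in which side is broken — is \emph{strictly} longer. Once the configuration is fixed, obtaining $8.473981\ldots$ is routine calculus.
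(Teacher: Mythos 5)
A preliminary point that frames everything else: this paper never actually proves Theorem~\ref{rectangleT}. The theorem is only announced (``We prepare the paper with the following theorem''), and the sole clue to its intended proof is the remark in Section~\ref{rest} that an analog of Proposition~\ref{diffproposition} is used ``during the pruning of cases''. So your proposal cannot be matched against a written proof here; I can only test it against the configuration the theorem asserts and against the tools the paper says are used. Judged that way, your outline has the right overall shape (localize to sides and corners, classify the local Steiner structure via properties (a)--(c), prune, optimize), but it contains one concrete false step and one decisive omission.

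The false step: you claim that along the middle of each side $\Sigma$ reduces to a segment ``at distance exactly $r$'', i.e.\ lying on $M_r$. It does not. In the asserted minimizer the side-parallel segments sit at distance $r\cos(\pi/24)\approx 0.9914\,r$ from their sides and terminate at \emph{energetic} points where $\Sigma$ turns by $\pi/12$ --- these are precisely the angles $\approx 11\pi/12$ labelled in Fig.~\ref{rectangle}, and the offset is forced by the bisector condition of Lemma~\ref{3points} at those kinks. The reason a distance-$r$ segment is not optimal is that a segment at distance $d<r$ covers an extra $\sqrt{r^2-d^2}$ of the side beyond each endpoint, so it can be shorter; the optimum trades this against the cost of reconnecting to the corner tripods and lands strictly inside $M_r$. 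Your version of the picture is therefore not stationary, would yield a constant different from $8.473981$, and cannot produce $21$ segments: that count is $3+2+4\cdot2+4\cdot2$, requiring the four pairs of kinked connector segments at the ends of the side segments plus the second, tiny kink near each corner tip (rightmost panel of Fig.~\ref{rectangle}), none of which exist if the side segments lie on $M_r$ and run straight into the branch points. The omission: the entire mathematical content of such a theorem is the completeness of the list of local corner candidates and the strict length comparison among them, which you compress to ``one then enumerates\dots and compares''; the paper's declared instrument for exactly this step is the first-variation machinery of Section~\ref{diff} --- in particular Proposition~\ref{diffproposition}(ii), which equates the length derivatives at the two energetic points over a common $y$, generates the transcendental stationarity system (whence $\cos(\pi/24)$ and $8.473981$), and eliminates non-stationary candidates without computing their lengths --- and your proposal never invokes it.
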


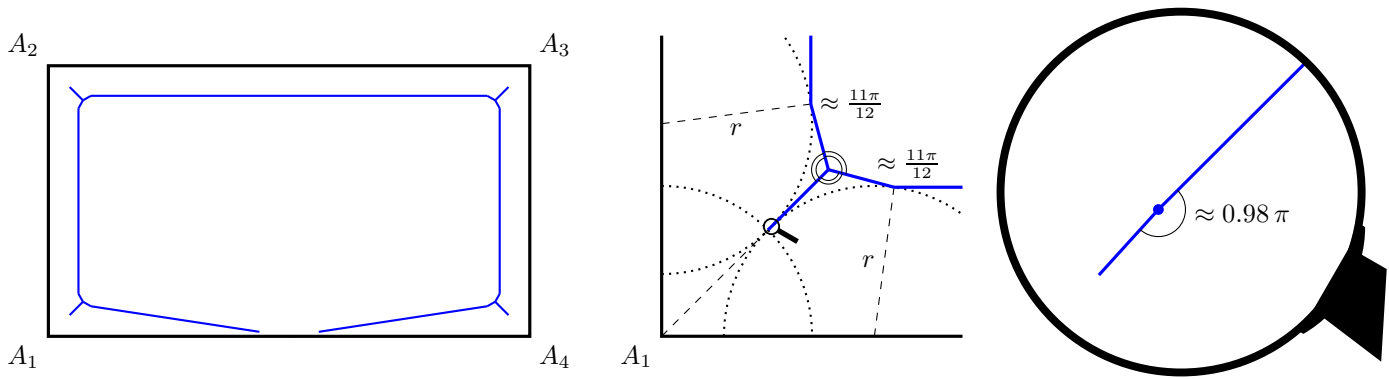
\begin{figure}[H]
    \centering
    \hfill
    \begin{tikzpicture}[scale=0.4]
        
        \def\w{8}
        \def\h{4.5}
        
        \def\t{.85}
        \def\c{}
        
        \foreach\x in {-1,1} {
            \foreach\y in {-1,1} {
                \draw[blue, thick]  ({\x*(-\w+1 - .292893218813455)},
                        {\y*(-\h+1 - .2928932188134548}) -- 
                       ({\x*(-\w+2 - \t)},
                        {\y*(-\h+2 - \t)});
                \draw[blue, thick]  ({\x*(-\w+2 - \t)},
                        {\y*(-\h+2 - \t)}) -- 
                       ({\x*(-\w+1)}, 
                        {\y*(-\h+2 -.5857864376269108)});
                \draw[blue, thick]  ({\x*(-\w+2 - \t)},
                        {\y*(-\h+2 - \t)}) -- 
                       ({\x*(-\w+2 - .5857864376269095)},
                        {\y*(-\h+1)});
            }
        }
        \draw[blue, thick]  (-\w+2 - .5857864376269095,-\h+1) -- 
               ( 0,-\h);
        \draw[blue, thick]  ( 0,-\h) -- 
               ( \w-2 + .5857864376269095,-\h+1);
        \draw[blue, thick]  (-\w+1, -\h+2 - .5857864376269108) -- 
               (-\w+1,  \h-2 + .5857864376269108);
        \draw[blue, thick]  (-\w+2 - .5857864376269095, \h-1) -- 
               ( \w-2 + .5857864376269095, \h-1);
        \draw[blue, thick]  ( \w-1, -\h+2 - .5857864376269108) -- 
               ( \w-1,  \h-2 + .5857864376269108);
               
        \fill[white] (1, -\h) arc (0:180:1);
        
        \draw [very thick] (-\w,-\h) rectangle (\w,\h);
        
        \draw (-\w, -\h) node[below left]{$A_1$};
        \draw (-\w, \h) node[above left]{$A_2$};
        \draw (\w, \h) node[above right]{$A_3$};
        \draw (\w, -\h) node[below right]{$A_4$};
        
    \end{tikzpicture}
\hfill
    \begin{tikzpicture}[scale=2]
        
        \coordinate (V) at (1.10837, 1.10837);
        \coordinate (Q) at (.7071, .7071);
        \coordinate (C) at (.73, .73);
        
        \draw[dashed] (0,0) -- (Q);
        \draw[thick, dotted] (0, 1) arc (90:0:1);
        \draw[thick, dotted] (.414, 0) arc (180:55:1);
        \draw[dashed] (1.414, 0) -- (1.544739754593147, 0.9914448613738104) node[pos=0.5, left]{$r$};
        \draw[thick, dotted] (0, .414) arc (-90:35:1);
        \draw[dashed] (0, 1.414) -- (0.9914448613738104,1.544739754593147) node[pos=0.5,below]{$r$};
        
        \draw [very thick] (2,0) -- (0,0) -- (0,2);
        \draw (0, 0) node[below left]{$A_1$};
        \draw [very thick, blue] (0.9914448613738104,2)-- (0.9914448613738104,1.544739754593147);
        \draw [very thick, blue] (V)-- (0.9914448613738104,1.544739754593147);
        \draw [very thick, blue] (V)-- (1.5447397545931463,0.9914448613738106);
        \draw [very thick, blue] (1.5447397545931463,0.9914448613738106)-- (2,0.9914448613738102);
        \draw [very thick, blue] (Q)-- (V);    

        \def\rr{0.12}
        \draw[shift={(V)}] 
            (0.966*\rr ,-0.2588*\rr) arc(-15:105:\rr);
        \def\rr{0.09}
        \draw[shift={(V)}] 
            (0.966*\rr ,-0.2588*\rr) arc(-15:105:\rr);
        \def\rr{0.096}
        \draw[shift={(V)}] 
            (0.966*\rr ,-0.2588*\rr) arc(-15:-135:\rr);
        \def\rr{0.072}
        \draw[shift={(V)}] 
            (0.966*\rr ,-0.2588*\rr) arc(-15:-135:\rr);
        \def\rr{0.112}
        \draw[shift={(V)}] 
            (-0.2588*\rr ,0.966*\rr) arc(105:225:\rr);
        \def\rr{0.082}
        \draw[shift={(V)}] 
            (-0.2588*\rr ,0.966*\rr) arc(105:225:\rr);
        
        \node[right] at (0.9914448613738104,1.544739754593147) {\small $\approx \frac{11\pi}{12}$};
        \node[above] at (1.644739754593147, 0.9914448613738104) {\small $\approx \frac{11\pi}{12}$};
        
        \def\ll{0.051}
        \def\llll{0.2}
        \draw[line width = .7pt] (C) circle (\ll);
        \draw[shift={(C)}, line width = 2pt] (\ll * .866, \ll * -.5) -- (\llll * 0.866, \llll * -.5);
    \end{tikzpicture}
    \hfill
    \begin{tikzpicture}[scale=48]
        \coordinate (V) at (.7645532062, .76593);
        \coordinate (Q2) at (.723714, .725155);
        \coordinate (Q1) at (.707224, .706989);
        \coordinate (C) at (.73, .73);

        \draw[very thick, blue] (V) -- (Q2) -- (Q1);
        \fill[blue] (Q2) circle (0.0015);
        
        \draw[shift={(Q2)}, rotate=45] (0.0075, 0) arc (0 : -177 : 0.0075) node[right, pos=0.3]{$\approx 0.98\, \pi$};
        
        \fill[black, shift={(C)}, rotate=60] (-0.01, -0.048) -- (-0.013, -0.0715) -- (0.01, -0.06) -- (0.01, -0.048) -- cycle;
        \draw[black, line width=3] (C) circle (0.05);
        
        \fill[black, shift={(C)}, rotate=60] (0.0171, -0.046) arc (-70 : -110 : 0.05) -- (-0.0171, -0.049) arc (-120 : -60 : 0.0342) -- cycle;
    \end{tikzpicture}
    \hfill\ 
\caption{The minimizer for rectangle $M$ and $r < r_0(M)$.}

    \label{rectangle}
\end{figure}

\paragraph{Structure of the paper.} Section~\ref{stp} contains an introduction to the Steiner problem. Section~\ref{struct} is devoted to structural properties of $\Sigma$. In Section~\ref{diff} we ``derive in the picture''. Finally, Section~\ref{rest} contains applications of our methods and open questions.

\section{Steiner tree problem} \label{stp}

Consider a finite set of points $C \defeq \{ A_1, \dots, A_n \} \subset \R^2$. A \textit{Steiner tree} is a connected set $S\subset \R^2$, which contains $C$ and has minimal possible length. 
It is known that such $S$ always exists (but is not necessarily unique) and that it is the union of a finite set of segments. Thus, $S$ can be represented as a plane graph, such that its set of vertices contains $C$, and all its edges are straight line segments. This graph is connected and does not contain cycles, i.e. is a tree, which explains the naming of $S$. It is known that the maximum degree of this graph is no greater than $3$. Moreover, only vertices $A_i$ can have degree $1$ or $2$, all the other vertices have degree $3$ and are called \textit{Steiner points}. There are no more than $n-2$ Steiner points. The angle between any two adjacent edges is at least $2\pi/3$. That means that for a Steiner point the angle between any two edges incident to it is exactly $2\pi/3$. $S$ is called a \textit{full Steiner tree}, if the degree of each $A_i$ is $1$, or, equivalently, if the number of Steiner points is $n-2$.
A \textit{(full) Steiner forest} is a set, each connected component of which is a (full) Steiner tree.
Proof of the listed properties of Steiner trees and additional information on them can be found in the book~\cite{hwang1992steiner} and in the article~\cite{gilbert1968steiner}.


We define a \textit{local Steiner tree} as a connected compact acyclic set $S$, which contains $C$, and such that for any $x \in S \setminus C$ there is a neighborhood $U\ni x$ such that $S \cap U$ coincides with the Steiner tree on the set of points $S \cap \partial U$.  A local Steiner tree retains the following properties of a Steiner tree: it is the union of a finite set of segments; the angle between any two adjacent segments is at least $2\pi/3$.
We are going to use the following fact: a connected closed subset of a local Steiner tree is itself a local Steiner tree.

For a given tree $T$ we denote the set of its vertices of degree 1 or 2 as $\partial T$.

\begin{definition}
Define a ``wind rose'' as a set of six rays starting at the origin point with angle $\pi/3$ between any two adjacent rays; each ray is given a weight (a real number), which satisfy the following property: the weight of a ray is the sum of weights of two rays adjacent to it. (It follows, in particular, that the sum of the weights of two opposite rays (the ones forming a line) is zero.) 
\label{roza_v}
\end{definition}

By \textit{full Steiner pseudo-network} let us call a connected set $S$ which contains $C$, if for any wind rose $\mathcal R$ such that
\begin{itemize}
    \item [(i)] $S$ consists of finite number of segments which are parallel to $\mathcal R$
\end{itemize}
the following holds:
\begin{itemize}
    \item [(ii)] for any $x \in S \setminus C$ and small enough $\varepsilon > 0$, sum of weights of rays of $\mathcal R$ which are parallel to rays of the form $[xy)$, $y \in \partial B_\varepsilon (x) \cap S$, is zero.
\end{itemize}

It is clear that full (local) Steiner tree is a full Steiner pseudo-network.

For a given pseudo-network $T$ let us denote by $\partial T$ set of vertices of degree 1.

\begin{remark}
Suppose that $T$ is a full Steiner pseudo-network, and $\mathcal R$ is an arbitrary wind rose satisfying~(i). Let us assign to an each vertex $x \in \partial T$ a weight of a ray of $\mathcal R$, which is parallel to a directed segment of $T$ entering $x$ (such segment is unique by definition of $\partial T$). Then sum of assigned numbers over all $x\in \partial T$ is zero.
\label{main_rose_property}
\end{remark}


\begin{lemma}
Let $T$ be a full Steiner pseudo-network, $l$ be a line such that $T \not \subset l$. Then
\[  
\sharp(\partial T \cap l)\leq 2\sharp(\partial T \setminus l).
\]
\label{St_l}
\end{lemma}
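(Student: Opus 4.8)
The plan is to combine the local balancing condition built into the notion of a Steiner pseudo-network with a test direction chosen to be adapted to the line $l$, and to argue separately on the two half-planes bounded by $l$.

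\emph{Setup.} Normalize $l$ to be the $x$-axis and fix a wind rose $\mathcal R$ compatible with $T$, with directions $\alpha+\tfrac{\pi}{3}k$, $k=0,\dots,5$. The main case is $\alpha\in(0,\tfrac\pi3)$, i.e.\ $l$ parallel to none of the six directions, so no segment of $T$ is parallel to $l$. Call the three directions with positive $y$-coordinate (namely $\alpha,\alpha+\tfrac\pi3,\alpha+\tfrac{2\pi}3$) \emph{upward} and the other three \emph{downward}. Subdivide $T$ by inserting every point of $T\cap l$ that lies in the interior of a segment; then $T^{+}:=T\cap\{y\ge 0\}$ and $T^{-}:=T\cap\{y\le 0\}$ are finite forests meeting exactly along $T\cap l$, each edge of $T$ belongs to exactly one of them, and each edge of $T^{+}$ is either upward or horizontal (here, by the case assumption, not horizontal at all). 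Call the vertices of $T^{+}$ lying on $l$ the \emph{pegs}.

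\emph{Key identity and the trick.} Summing, over all vertices $v$ of $T^{+}$, the sum of the unit vectors of the edges of $T^{+}$ issuing from $v$ gives $0$, since each edge is counted twice with opposite signs. Every Steiner point of $T$ in the open half-plane $\{y>0\}$ is an interior point of $T$, hence by the pseudo-network condition the three unit vectors of its edges sum to zero, and after the subdivision all three of these edges lie in $T^{+}$; so such vertices contribute nothing. Since $T$ is full, the only remaining contributors are the degree-one vertices $t$ of $T$ in $\{y>0\}$, each contributing the unit vector $e_t$ of its edge, and the pegs, each contributing the sum of the unit vectors of its upward edges. Now dot the whole identity with the unit vector $\varphi$ in direction $\alpha+\tfrac\pi3$, the bisector of the three upward directions. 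Each upward direction makes an angle at most $\tfrac\pi3$ with $\varphi$, hence has scalar product $\ge\tfrac12$ with $\varphi$; so each peg contributes at least $\tfrac12$ times its number of upward edges. On the other hand $\langle e_t,\varphi\rangle\ge-1$ always (the edge of $t$ is parallel to $\mathcal R$). Writing $b^{+}$ for the number of degree-one vertices of $T$ in $\{y>0\}$ and $a^{+}$ for the number of degree-one vertices of $T$ on $l$ whose edge points upward (each contributes exactly one upward peg-edge), we obtain
\[
0 \;=\; \sum_{t\in\partial T,\ y(t)>0}\langle e_t,\varphi\rangle \;+\; (\text{peg contributions}) \;\ge\; -\,b^{+}+\tfrac12\,a^{+},
\]
so $a^{+}\le 2b^{+}$.

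\emph{Conclusion.} Running the same argument on $T^{-}$ with $-\varphi$ in place of $\varphi$ yields $a^{-}\le 2b^{-}$, where $a^{-}$, $b^{-}$ are defined symmetrically. Since no edge of $T$ is parallel to $l$, every vertex of $\partial T$ on $l$ has its edge pointing strictly up or strictly down, whence $\sharp(\partial T\cap l)=a^{+}+a^{-}\le 2(b^{+}+b^{-})=2\,\sharp(\partial T\setminus l)$. The subdivision points and any balanced degree-two vertices only add to the peg side of the inequalities $a^{\pm}\le\sum(\text{peg edges})$, so they are harmless.

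\emph{Main obstacle.} The case $l$ parallel to one of the six directions must be handled separately: then $T$ may contain segments lying on $l$, $T\cap l$ need not be finite, and a vertex of $\partial T$ on $l$ may have its edge running \emph{along} $l$, so it is neither in $T^{+}$ nor in $T^{-}$ and escapes the count above. I expect this to be the only real difficulty. One should bound the number of such flat endpoints by examining the segment-components of $T\cap l$: an endpoint of such a component is either a degree-one vertex of $T$ — and then, since $T\not\subset l$ and $T$ is connected, the component carries a branch of $T$ leaving $l$ that can be charged to a genuine off-line degree-one vertex — or it is a degree-three vertex carrying one upward and one downward edge, which is already covered by the half-plane sums (taking $\varphi$ to be the normal to $l$, the bisector of the now only two non-horizontal upward directions).
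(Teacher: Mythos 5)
Your main-case argument is correct and is, in substance, the paper's own proof. Dotting the local balance identity with the unit vector $\varphi$ in the bisector direction is exactly the same as choosing the wind-rose weighting $(1,2,1,-1,-2,-1)$ that the paper uses: the admissible weightings of a wind rose are precisely $w_k=\langle u_k,v\rangle$ for a fixed vector $v$ (the recurrence $w_{k+1}=w_k-w_{k-1}$ has solutions $A\cos(k\pi/3)+B\sin(k\pi/3)$), which is also the one-line justification — worth stating explicitly — for your claim that the pseudo-network condition forces the unit vectors at an interior vertex to sum to zero. The only structural difference is that you run the flux identity over the whole half-plane forest $T^{\pm}$ at once, while the paper restricts to one connected component of $\overline{T\cap l^{\pm}}$ at a time; both work in the generic case.

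The genuine gap is the case you yourself flag: $l$ parallel to a direction of the wind rose. Your proposed fix is only a sketch, and its delicate half is not the choice of test direction (taking $\varphi$ normal to $l$ is again exactly the paper's weighting $(1,1,0,-1,-1,0)$, which kills the two horizontal rays and still gives $a^{\pm}\le 2b^{\pm}$ for leaves with non-horizontal edges) but the ``flat'' leaves of $\partial T\cap l$ whose unique edge runs along $l$: these contribute $0$ to every half-plane sum and so are invisible to the flux argument. Your charging scheme — route each flat endpoint through its horizontal component of $T\cap l$ to an off-line branch — is plausible but unfinished: you must check that the branch actually terminates in a degree-one vertex off $l$ (not back on $l$), and that each off-line leaf is not charged both by this scheme and by the inequality $a^{\pm}\le 2b^{\pm}$ in a way that overspends the factor $2$. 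Until that bookkeeping is done the degenerate case is open in your write-up. (For what it is worth, the paper's component-wise reduction also silently drops such flat leaves, since a leaf whose edge lies inside $l$ belongs to no component of $\overline{T\cap l^{\pm}}$; so you have put your finger on the one point where the published argument is itself terse.)
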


\begin{proof}[Proof of Lemma~\ref{St_l}:]

Let $l^{+}$, $l^{-}$ be the two open half-planes bounded by $l$. Note that it is sufficient to prove the inequality for a closure of an arbitrary component of  $\overline{T\cap l^{+}}$ and $\overline{T\cap l^{-}}$,
denote such closure as $S$.

Consider a wind rose with the origin in the same open half-plane as $S$, such that the rays with positive weights are exactly the ones intersecting $l$: such wind rose exists, because $l$ intersects either $2$ or $3$ consecutive (in the counter-clockwise order) rays. In the former case we give these rays weights $1, 1$, in the latter case~--- $1, 2, 1$. The remaining rays will have weights $0, -1, -1, 0$ or $-1, -2, -1$ (the weights are listed in counter-clockwise order in each case). We assign weights to all leaf vertices in the way described in Remark~\ref{main_rose_property}. Then the sum of weights over the leaf vertices lying on $l$ is at least $\sharp(S \cap l)$. Since, according to Remark~\ref{main_rose_property}, the sum over all leaf vertices should be zero, there are at least $\frac{\sharp(S \cap l)}{2}$ leaf vertices not lying on $l$.

\end{proof}

\begin{remark}
\label{corollarystl}
Let $T$ be a full Steiner pseudo-network fully lying on one side of line $l$, such that equality in Lemma~\ref{St_l} is achieved. Then all leaf vertices in $\partial T \setminus l$ have weight $-2$, therefore all segments of $T$ incident to vertices from $\partial T \setminus l$ are pairwise collinear.
\end{remark}

\section{Structural properties of minimizers}
\label{struct}
Recall that we work in the setting from Subsection~\ref{setup}. 

Note that $\Sigma \subset N$ ($N$ is convex, so one can project the part of $\Sigma$ belonging to $\mathbb{R}^2 \setminus N$ on $N$ and length of
$\Sigma$ will strictly decrease).

Consider the closure of an arbitrary connected component of $\Sigma \setminus N_r$; denote it by $S$. Points from $S\cap M_r$ are called \textit{entering points}. 
Connectedness of $S$ implies that $\overline{B_r(S)} \cap M$ is a closed arc; denote it by $q(S)$.

The following lemma is proved in~\cite{cherkashin2018horseshoe} (the proof of these statements does not use the additional requirement $R > 5r$, which is inherited from the main theorem of the paper~\cite{cherkashin2018horseshoe}).

\begin{lemma}
\label{3points}
Let $S$ be the closure of a connected component of $\Sigma \setminus N_r$. 
Then 
\begin{itemize}
    \item [(i)] $S$ is a local Steiner tree connecting the set of entering points of $S$ and energetic points of $S$;
    \item [(ii)] $S$ contains one or two energetic points.
    \item [(iii)] Suppose that $S$ contains 2 energetic points $x_1$ and $x_2$.
Then
\begin{itemize}
    \item [(i)] there are unique points $y(x_1)$ and $y(x_2)$; \label{2pointsunique}
    \item [(ii)] if $x_i$ has degree 1 (i.e. $x_i$ is the end of a line segment $[z_ix_i] \subset \Sigma$), then $z_i$, $x_i$ and $y(x_i)$ are collinear; \label{stepen1}  
    \item [(iii)] if $x_i$ has degree 2 (i.e. $x_i$ is the end of a line segments $[z_i^1x_i], [x_iz_i^2] \subset \Sigma$), then ray $[y(x_i)x_i)$ contains the bisector of $z_i^1x_iz_i^2$. \label{stepen2} 
\end{itemize}

\end{itemize}
\end{lemma}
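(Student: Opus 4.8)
The plan is to rely heavily on the basic structure theory already recalled in the paper and to use the wind-rose / leaf-counting machinery (Lemma~\ref{St_l}, Remark~\ref{corollarystl}) to control the number of energetic points. First, for part (i): since $S$ is the closure of a connected component of $\Sigma\setminus N_r$, every non-energetic point $x\in S$ away from $\partial S$ is, by property~(c), locally a segment or a regular tripod; together with the fact that $\Sigma$ contains no cycles (property (a)) and that $\Sigma$ is a finite union of injective arcs (Teplitskaya's theorem, part (i)), this forces $S$ to be a connected acyclic finite union of segments meeting at angles $\geq 2\pi/3$, i.e. a local Steiner tree. Its degree-$1$ and degree-$2$ vertices are precisely the points where $S$ fails to be locally a segment or tripod: these are the entering points (on $M_r$, where $S$ meets $\overline{N_r}$) and the energetic points of $\Sigma$ lying on $S$. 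Here I would invoke the quoted fact that a connected closed subset of a local Steiner tree is a local Steiner tree, applied after checking that near an entering point $S$ cannot continue into $N_r$ by a length-comparison / projection argument as in the paragraph preceding the lemma.

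For part (ii) — that $S$ has one or two energetic points — the key geometric input is that $R>r$, so $M_r$ is a convex curve and $\overline{B_r(S)}\cap M=q(S)$ is a single arc; hence all energetic points of $S$ have corresponding points $y(\cdot)$ lying in that one arc. The plan is to argue that if $S$ contained three energetic points $x_1,x_2,x_3$ with corresponding far points $y_1,y_2,y_3$, then, because $S$ is a full Steiner pseudo-network once we also adjoin the entering points as leaves, Lemma~\ref{St_l} and the convexity of $q(S)$ would be violated. Concretely, the entering points all lie on $M_r$, which locally is on one side of a supporting line, and the energetic points are ``outward'' leaves pointing toward $M$; a wind-rose weighting chosen to separate $M_r$ from the far side of $q(S)$ should show that at most two outward leaves are compatible with the leaf-sum-zero identity of Remark~\ref{main_rose_property} given the positions of the entering leaves. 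I expect this is where most of the real work lies, and it is essentially the content borrowed from~\cite{cherkashin2018horseshoe}; I would reproduce that argument, being careful that it uses only $R>r$ and not $R>5r$.

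For part (iii), assume $S$ has two energetic points $x_1,x_2$. Uniqueness of $y(x_i)$: if $x_i$ had two distinct corresponding points $y,y'$ on the arc $q(S)$, the two empty balls $B_r(y),B_r(y')$ together with the empty ball at the corresponding point of the other energetic point would, by a covering argument, force $M$ not to be covered by $\Sigma$ within distance $r$ unless the geometry collapses — more precisely, one uses that between two corresponding points of the same energetic point there must be a ``gap'' in $q(S)$ that then cannot belong to a single arc, contradicting connectedness of $S$; alternatively this is exactly where the hypothesis that $S$ has only two energetic points is used, since a third corresponding direction would give a third energetic point. Items (iii)(ii) and (iii)(iii) are first-order optimality conditions: perturb $x_i$ slightly and compute the first variation of $\H(\Sigma)$ subject to the constraint $\dist(y(x_i),\Sigma)\le r$. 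If $x_i$ has degree $1$ with pendant segment $[z_ix_i]$, moving $x_i$ changes the length by the projection of the displacement onto the direction $z_i\to x_i$ and changes the constraint by the projection onto $y(x_i)\to x_i$; stationarity along the circle $\partial B_r(y(x_i))$ forces these two directions to be aligned, i.e. $z_i,x_i,y(x_i)$ collinear. If $x_i$ has degree $2$, the length gradient is the sum of the two unit vectors from $x_i$ toward $z_i^1$ and $z_i^2$, whose direction is that of the internal bisector of $\angle z_i^1x_iz_i^2$, and the same stationarity argument forces $[y(x_i)x_i)$ to contain that bisector. The main obstacle throughout is part (ii): making the leaf-counting argument rigorous requires carefully setting up $S$ (with adjoined entering leaves) as a full Steiner pseudo-network and choosing the wind rose so that the convex position of the entering points on $M_r$ interacts correctly with the outward-pointing energetic leaves.
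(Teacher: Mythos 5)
First, a point of reference: the paper does not actually prove Lemma~\ref{3points}; it is quoted from~\cite{cherkashin2018horseshoe}, so there is no in-paper proof to compare against. Judged on its own, your proposal is fine for part~(i) and for items~(iii)(ii)--(iii)(iii): the first-variation argument (stationarity of length under perturbations of $x_i$ tangent to $\partial B_r(y(x_i))$, giving collinearity in the degree-$1$ case and the bisector condition in the degree-$2$ case) is the standard and correct one. But there are genuine gaps exactly where you suspect.

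The main gap is part~(ii). Your plan --- adjoin the entering points as leaves, regard $S$ as a full Steiner pseudo-network, and bound the number of energetic points via Lemma~\ref{St_l} and Remark~\ref{main_rose_property} --- would fail for two concrete reasons. First, an energetic point may have degree $2$ with an angle strictly between $2\pi/3$ and $\pi$ between its incident segments; across such a point the segment directions are not confined to a single wind rose, so condition~(i) in the definition of a full Steiner pseudo-network fails and the weight bookkeeping is unavailable (this is precisely why the paper's proof of the subsequent Proposition cuts $\St(S)$ at degree-$2$ energetic points before invoking Lemma~\ref{St_l}). Second, Lemma~\ref{St_l} counts leaves on a fixed line $l$ against leaves off $l$; energetic points are in general neither leaves nor collinear, so the lemma does not count them at all. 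The argument that actually works (in~\cite{cherkashin2018horseshoe}, and echoed in case~(b) of the Proposition's proof and Fig.~\ref{3pointsFig}) is metric rather than combinatorial: assuming three energetic points $x_1,x_2,x_3$, one forms the convex polygon bounded by arcs of $S$, the segments $[x_iy(x_i)]$, and tangent lines to $M$ at the $y(x_i)$, and contradicts the exterior-angle sum, using that the angle of $S$ at each $x_i$ is at most $2\pi/3$ and that the angles at the $y(x_i)$ are at most $\pi/2$ because $B_r(y(x_i))\cap\Sigma=\emptyset$. You also never address the lower bound in~(ii) (at least one energetic point: otherwise the component could be shortened by projecting onto the convex set $N_r$ without affecting coverage), and (iii)(i) is only gestured at --- the uniqueness of $y(x_i)$ needs the same polygon/angle count, with a second corresponding point of $x_1$ playing the role of a third empty-ball center, not the vague covering claim you give.
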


In this section we prove the following statement.

\begin{proposition}
Let $S$ be the closure of a connected component of $\Sigma \setminus N_r$. 
Then 
\begin{itemize}
\item [(i)] the convex hull of $S$ is a line segment, a triangle or a quadrangle; the vertices of convex hull are only energetic or entering points of $S$, the latter no more than 2;
\item [(ii)] $S$ has at most 3 entering points.
\end{itemize}
\end{proposition}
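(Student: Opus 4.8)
The plan is to combine Lemma~\ref{3points} with two consequences of the curvature bound $R>r$. Write $P$ for the set of entering points of $S$ and $X$ for its set of energetic points. By Lemma~\ref{3points}, $1\le\sharp X\le 2$ and $S$ is a local Steiner tree on $P\cup X$, so $\conv(S)=\conv(P\cup X)$ and every vertex of $\conv(S)$ is an extreme point of $P\cup X$. Such a vertex cannot have degree $3$ in $S$: a degree-$3$ vertex of a local Steiner tree is a regular tripod, hence a convex combination with positive weights of three of its neighbours, hence not extreme. Thus every vertex of $\conv(S)$ is a point of $P\cup X$ of degree at most $2$; since $\sharp X\le 2$, part~(i) reduces to showing that \emph{at most two entering points are vertices of $\conv(S)$}.

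The first geometric input is a local fact at an entering point $p$. Let $t_p$ be the tangent line to $M_r$ at $p$, and $H_p$ the closed half-plane bounded by $t_p$ not containing $N_r$. Then every segment of $S$ leaving $p$ points into $H_p$: if some segment left $p$ strictly toward $N_r$, then, since $M_r$ has radius of curvature at least $R-r>0$, a short initial piece of that segment would sit on the far side of $M_r$ from $t_p$, i.e.\ inside $\Int N_r$, contradicting $S\cap\Int N_r=\emptyset$. Two corollaries: (a) an entering point has degree at most $2$ in $S$, since three segments issuing into one closed half-plane cannot be pairwise at angle $\ge 2\pi/3$; (b) no segment of $S$ joins two entering points, since that segment would be a chord of the strictly convex region $N_r$ and so have its relative interior in $\Int N_r$.

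Now suppose, for contradiction, that $p_1,p_2,p_3\in P$ are all vertices of $\conv(S)$. The triangle $\triangle p_1p_2p_3$ lies in $N_r$ because its vertices lie on the convex curve $M_r$; hence, if $\conv(S)$ had no vertex outside this triangle, we would get $\conv(S)=\triangle p_1p_2p_3$, so $S\subseteq N_r$, and then $S\subseteq M_r$ by $S\cap\Int N_r=\emptyset$ — impossible, since the strictly convex curve $M_r$ contains no segment. So $\conv(S)$ has a further vertex $q\in P\cup X$ outside $\triangle p_1p_2p_3$. I would then study the minimal subtree of $S$ joining $p_1,p_2,p_3$ — a spider with some centre $m\in S$ — and how its three legs (which avoid $\Int N_r$ and, by the local fact above, leave each $p_i$ into $H_{p_i}$) together with $M_r$ cut the exterior of $N_r$ into cells; tracking which cell contains $m$ and $q$ and which supporting line of $\conv(S)$ to use at each $p_i$ should force either a part of $S$ into $\Int N_r$ or a violation of the local fact. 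Together with $\sharp X\le 2$ this gives~(i). For~(ii), knowing from~(i) that at most two entering points are vertices of $\conv(S)$, any extra entering point lies in the relative interior of an edge, or in the interior, of $\conv(S)$; using~(b) and the same cell decomposition of the exterior of $N_r$ by $S$ one bounds the number of these by $1$, yielding $\sharp P\le 3$. (An alternative for~(ii) is to invoke Lemma~\ref{St_l} and Remark~\ref{corollarystl} for a line tangent to $M_r$ at a point of $M_r\setminus S$, once $S$ has been put in the form of a full Steiner pseudo-network.)

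The step I expect to be the main obstacle is the geometric core of the previous paragraph: making rigorous that a tree in the exterior of a strictly convex region cannot meet its boundary in three points that are simultaneously extreme for the tree's convex hull. The bookkeeping is delicate — the cyclic order of the spider's legs around $m$ versus the order of $p_1,p_2,p_3$ on $M_r$, the two cases $m\in N_r$ and $m\notin N_r$, the choice of supporting line at each $p_i$, and the need to handle energetic points of degree $3$ (where Lemma~\ref{3points}(iii) gives no information) on the same footing as the rest.
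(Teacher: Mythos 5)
Your preliminary reductions are sound and partly coincide with the paper's: the observation that every point of $S$ outside the terminal set is a convex combination of nearby points of $S$, so that the vertices of $\conv(S)$ lie among the entering and energetic points, is exactly the paper's first step; and your ``local fact'' (segments of $S$ leave an entering point into the half-plane of the tangent to $M_r$ away from $N_r$, whence entering points have degree at most $2$ and no segment joins two of them) is correct and close in spirit to what the paper uses. But the core of both parts is missing. For~(i), the claim that at most two entering points are vertices of $\conv(S)$ is left as an unexecuted cell-decomposition argument (``I would then study\dots should force\dots''), and you yourself flag it as the main obstacle. The paper closes this step directly and briefly: it orders the entering points along the arc of $M_r$ cut out by $\overline{B_r(q(S))}$, takes the two extreme ones $Y_1,Y_2$ as the only admissible hull vertices, and for any entering point $x$ strictly between them uses the tangent to $M_r$ at $x$ --- $Y_1$ and $Y_2$ lie strictly on the $N_r$-side of that tangent and on opposite sides of $x$ along it, while $S$ near $x$ lies on the other side, so $x$ is a convex combination of points of $S$ and is not a hull vertex. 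Your local fact is the right ingredient here, but you never combine it with the extreme-point ordering to finish.

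For~(ii) the gap is larger. ``One bounds the number of these by $1$'' is an assertion, not an argument, and this bound is where almost all of the paper's work lies: it proves that for each non-extreme entering point $Y$ the continuation $R(Y)$ of its segment meets the line $Y_1Y_2$ (itself a nontrivial step involving a neighbouring component $S'$ and acyclicity of $\Sigma$), builds the augmented network $\St(S)$ from these continuations, and then runs a case analysis over the number ($1$ or $2$) and degrees ($1$ or $2$) of the energetic points, applying Lemma~\ref{St_l} to the full pseudo-networks obtained by cutting $\St(S)$ at energetic points, together with angle-sum arguments for a pentagon and for the convex polygon bounded by $S$, the segments $[x_iy(x_i)]$ and tangents to $M$, and Remark~\ref{corollarystl} in the equality case. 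Your parenthetical alternative --- apply Lemma~\ref{St_l} to a tangent line ``once $S$ has been put in the form of a full Steiner pseudo-network'' --- begs the question: $S$ itself is generally \emph{not} a full Steiner pseudo-network, because the balance condition fails at energetic points of degree $1$ or $2$ and at entering points; making it one, and controlling what that surgery does to the count of entering points, is precisely the content of the paper's proof. So the proposal identifies the right objects but does not prove either assertion of the proposition.
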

\paragraph{Proof of~(i).} Since $S$ is a local Steiner tree for its entering and energetic points, every other point $x$ is a convex combination of points from a neighborhood of $x$. So it is enough to show that all entering points except at most two lie in the interior of $\conv(S)$.

Suppose the contrary and consider maximal (by inclusion) arc $A \subset M_r$ ending by entering points of $S$ (further we call them \textit{extreme}), that $A \subset \overline{B_r(q(S))}$. Consider an arbitrary entering point $x$ lying in the interior of the arc and set the tangent line to $M_r$ at $x$. Since $N_r$ is convex, connected component $S$ contains points in the both sides of the tangent line, say $t_1$ and $t_2$. Then $x$ is a convex combination of $t_1$ and $t_2$; which is a contradiction.  \qed

\begin{figure}
\centering
    \hfill
    \begin{tikzpicture}[line cap=round,line join=round,>=triangle 45,x=1cm,y=1cm, scale =12]
    
    \def\area{(-6.4, 1.4) rectangle (-5.84, 1.7)};
    \clip \area;
    
    \draw [dashed,domain=-7.000492781939422:-5.045681011148128] plot(\x,{(--3.5920094869218673-0*\x)/2.204854695810054});
    
    \node[black, above] at (-5.9,1.6291366019483557) {$M_r$};
        
    \draw [,dotted] (-6.78334510611128,0.6142852583161696) circle (1.014851343632186cm);
    \draw [dotted] (-5.466807795436834,0.6142852583161696) circle (1.014851343632186cm);
    
    \draw [very thick, blue] (-6.09955343275884,1.5500731447679825)-- (-6.184596413792117,1.4336894645038996);
    \draw[] (-6.184596413792117,1.4336894645038996) -- (-6.78334510611128,0.6142852583161696);
    \draw [very thick, blue] (-6.123454498898773,1.6041517478095404)-- (-6.09955343275884,1.5500731447679825);
    \draw [very thick, blue] (-6.355098527042809,1.629136601948356)-- (-6.123454498898773,1.6041517478095404);
    \draw [very thick, blue] (-6.105197759879817,1.6291366019483557)-- (-6.123454498898773,1.6041517478095404);
    
    \draw [] (-5.905972590576058,1.529193741380122)-- (-5.466807795436834,0.6142852583161696);
    \draw [very thick, blue] (-5.905972590576058,1.529193741380122)-- (-6.09955343275884,1.5500731447679825);
    \draw [very thick, blue] (-5.857448245737062,1.6051829270264233)-- (-5.905972590576058,1.529193741380122);
    
    \def\nodesize{0.006}
    
    \fill[blue] (-6.09955343275884,1.5500731447679825) circle (\nodesize)
        node[black, left]{$v_1$};
    \fill[blue] (-6.184596413792117,1.4336894645038996) circle (\nodesize)
        node[black, right]{$x_1$};
    \fill[blue] (-5.638657348174386,0.6142852583161696) circle (\nodesize)
        node[black, left]{$b$};
    \fill[blue] (-6.355098527042809,1.629136601948356) circle (\nodesize)
        node[black, above]{$U_1$};
    \fill[blue] (-6.105197759879817,1.6291366019483557) circle (\nodesize)
        node[black, above]{$U_2$};
    \fill[white] (-5.935,1.51) circle (0.015);
    \fill[blue] (-5.905972590576058,1.529193741380122) circle (\nodesize)
        node[black, below left] {$v_2 = x_2$};
    \fill[blue] (-6.123454498898773,1.6041517478095404) circle (\nodesize);
        \node[black, right] at(-6.12, 1.595){$V$};
    \end{tikzpicture}
    \hfill
    \begin{tikzpicture}[line cap=round,line join=round,>=triangle 45,x=1cm,y=1cm,scale=1.45]
        
        \def\area{(-6.7,-1.2) rectangle (-3.3,1.2)};
        \clip \area;
        
        \fill[gray!20]
        (-6.144235945986252,-0.4940859356732673) --
        (-4.987071803136054,0.7986098324835481) --
        (-4.140753395373325,0.6212882232099375) --
        (-3.7597158072642687,-0.8068658531644282) --
        (-5.006891543175035,-0.98100560400365) -- cycle;
        \node at (-4.9,-.3) {\large $P$};
        
        \draw [very thick, %
            rotate around={%
                -2.659749941701052:(-4.447051207241462,2.388595008642463)%
            },] %
            (-4.447051207241462,2.388595008642463) ellipse (3.8449094705207605cm and 3.237364037629703cm);
        \draw [,dotted] (-6.144235945986252,-0.4940859356732673) circle (1.4781115348364897cm);
        \draw [,dotted] (-3.759715807264265,-0.8068658531644273) circle (1.4781115348364897cm);
        \draw [] (-4.140753395373325,0.6212882232099375)-- (-3.759715807264265,-0.8068658531644273);
        \draw [blue, very thick] (-5.067152322607865,1.042656927956234)-- (-4.987071803136054,0.7986098324835481);
        \draw [blue, very thick] (-3.6869909015675746,1.025732914479689)-- (-4.140753395373325,0.6212882232099375);
        \draw [blue, very thick] (-4.987071803136054,0.7986098324835481)-- (-4.140753395373325,0.6212882232099375);
        \draw [blue, very thick] (-4.987071803136054,0.7986098324835481)-- (-5.1583825277993,0.607234520536362);
        \draw [] (-5.1583825277993,0.607234520536362)-- (-6.144235945986252,-0.4940859356732673);
        \draw [] (-5.006891543175035,-0.98100560400365)-- (-2.546267310088977,-0.6374353448006458);
        \draw [] (-6.753323099257971,-0.23332366004039778)-- (-5.006891543175035,-0.98100560400365);

        \def\nodesize{0.05}
        \fill[] (-6.144235945986252,-0.4940859356732673) circle (\nodesize);
            \node[below] at (-6.3, -0.5) {$y(x_1)$};
        \fill[blue] (-5.1583825277993,0.607234520536362) circle (\nodesize);
            \node[left] at (-5.18, .58) {$x_1$};
        \fill[] (-3.759715807264265,-0.8068658531644273)  circle (\nodesize)
            node[below]{$y(x_2)$};
        \fill[blue] (-4.140753395373325,0.6212882232099375) circle (\nodesize);
            \node[right] at (-4.14, .5) {$x_2$};
        
    \end{tikzpicture}
    \hfill\phantom{}
    \caption{Illustration to the proof of Lemma~\ref{3points}~(ii)}
    \label{3pointsFig}
\end{figure}
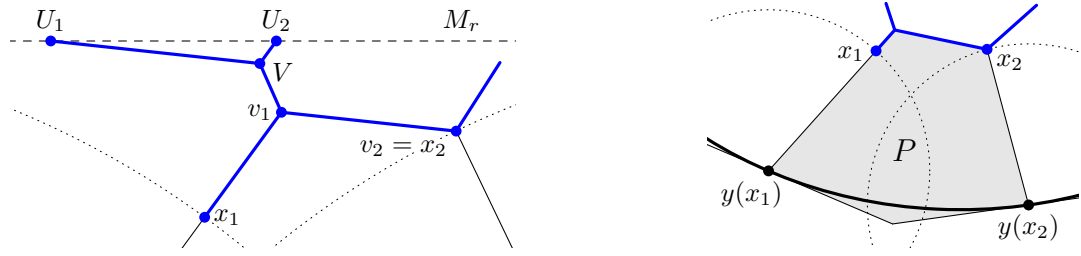

\paragraph{Proof of~(ii).} 
Denote extreme (defined analogously to the previous proof) entering points by $Y_1$ and $Y_2$.
For every other entering point $Y \in S$ denote by $R = R(Y)$ a continuation of a segment of $S$ which contains $Y$ beyond the point $Y$. Let us show that $R$ intersects with a line $Y_1Y_2$.
Note that $Y_1$, $Y$, $Y_2$ are contained in arc $M_r \cap \overline{B_r(q(S))} =: Q(S)$.

Suppose the contrary, that is $R$ either meets again with $M_r$ at $U \in Q(S)$ or is tangent to the $M_r$. 
Let us show that on the arc $YU \subset Q(S)$ there will be an entering point $Y'$ that belongs to the closure of another connected component $S'$.
If $R$ is tangent to $M_r$ in $Y$, then $Y$ lies in a closure of another component, therefore we can put $Y' = Y$.
Otherwise $Y$ lies in a closure of a component of $\Sigma \cap \Int(N_r)$, denote this component as $T$.
Since $\Sigma \cap \Int(N_r)$ lies in the Steiner part of $\Sigma$, a closure of any component of $\Sigma \cap \Int(N_r)$ is a full local Steiner tree. Then $\partial T \setminus \{Y\}$ contains a point in each of closed half-planes divided by $YU$; since $\partial T \subset M_r$, there is a vertex $Y' \in \partial T$ on the arc $YU$, since $\Sigma$ is acyclic, $Y'$ is not contained in $S$.

But then $q(S') \subset Q(S)$ or $S \cap S' \neq \emptyset$. It follows from the first option that $S'$ contains no energetic points; the second option is impossible by definition.

For each entering point $Y$ denote by $I(Y)$ an intersection of $R(Y)$ with $Y_1Y_2$. By $\St(S)$ we denote the union of $S$ and all segments $[YI(Y)]$.


Let us consider two cases.

\begin{itemize}
\item [(a)] 
Let $S$ have one energetic point $x$. 
\begin{itemize}
\item If $x$ has degree $1$, then $\St(S)$ is a full Steiner pseudo-network, then by application of Lemma~\ref{St_l} to $\St(S)$ and $Y_1Y_2$, we have that $\St(S)$ intersects with $Y_1Y_2$ at most two times, thus $S$ has at most two entering points.

\item If $x$ has degree $2$ then $\St(S)$, being cut in the point $x$, falls apart into two full networks $\St_1$ and $\St_2$. 
Let one of them have at least two entering points (say, $\St_1$).
By Lemma~\ref{St_l} for $\St_1$ and $Y_1Y_2$, it can be only a tripod; denote by $V_1$ the branching point of the tripod and by $U_1^1$ and $U_1^2$ the points of intersection with $Y_1Y_2$. 
Let $\St_2$ also be a tripod with branching point $V_2$ and 
with points  $U_2^1$ and $U_2^2$ on the line $Y_1Y_2$; without loss of generality we can assume that points $U_1^2$ and $U_2^1$ are lying between points $U_1^1$ and $U_2^2$.
Then the sum of angles of pentagon $U_1^1V_1xV_2U_2^2$ is at least $10\pi/3$, because
$\angle U_1^1V_1x = \angle xV_2U_2^2 = 2\pi/3$ (these angles are external for the pentagon and corresponding inside angles are equal to $4\pi/3$), $\angle V_1xV_2 \geq 2\pi/3$. That is a contradiction.

Summing up, no subtree can contain three entering points, and subtrees can not both contain two entering points simultaneously, which finishes this case.
\end{itemize}

\item[(b)] 
Let $S$ have two energetic points $x_1$ and $x_2$. Consider a polygon $P$ (see the right-hand side of Figure~\ref{3pointsFig}), that bounded by $S$, by segments $[x_1y(x_1)]$, $[x_2y(x_2)]$ and by tangents to $M$ in points $y(x_1)$ and $y(x_2)$ (by Lemma~\ref{2pointsunique} points $y(x_1)$ and $y(x_2)$ are unique). Note that $P$ is convex and its angles at vertices from $S$ are at most $2\pi/3$. Since $B_r(y(x_1)) \cap \Sigma = B_r(y(x_2)) \cap \Sigma = \emptyset$, angles at $y(x_1)$ and $y(x_2)$ are at most $\pi/2$; by Lemma~\ref{3points}(iii) the line $y(x_1)x_1$ contains a side of $P$, thus angle at $y(x_1)$ is less than $\pi/2$.  If $P$ has at least 3 vertices from $S$, then the sum of external angles of $P$ is strictly greater than $3\pi/3+2\pi/2 = 2\pi$, what is impossible. Therefore, $P$ contains no more than two vertices from $S$.

Let $x_i$ have degree 1. Then, if it is connected with $x_{3-i}$ by a segment of $\Sigma$, segment $[x_1x_2]$ can be removed from $\St(S)$ and by Lemma~\ref{St_l} remaining full Steiner network has no more than two points of intersection with $Y_1Y_2$, so $S$ has no more than two entering points. In the other case $x_i$ is connected by a segment of $\Sigma$ with a branching point $V_i$. Let $v_i = V_i$, if $x_i$ has degree 1, and $v_i = x_i$, if $x_i$ has degree 2 (see the left-hand side of Figure.~\ref{3pointsFig}). Then $v_1$ and $v_2$ are vertices of $P$. Since $P$ contains no more than 2 vertices from $S$, it turns out that either $v_1 = v_2$, or $[v_1v_2] \subset \Sigma$.

If $v_1 = v_2$ then after deleting line segments $[v_1x_1]$ and $[v_2x_2]$ from $S$, application of Lemma~\ref{St_l} to line $Y_1Y_2$ gives that $S$ has at most two entering points.

If $[v_1v_2] \subset \Sigma$, then we consider two cases.
\begin{itemize}
\item The case where both points $x_1$ and $x_2$ have degree $1$; in this case $S$ is a full Steiner pseudo-network. The application~\ref{St_l} to the line $Y_1Y_2$ gives that $S$ contains at most 4 entering points, moreover if $S$ has 4 entering points then equality in lemma is achieved and by Remark~\ref{corollarystl} rays $[x_1y(x_1))$ and $[x_2y(x_2))$ have similar direction. 
Then the pass between $x_1$ and $x_2$ in $P$ has 3 branching points but $P$ has at most 2 vertices from $S$; which is a contradiction.

\item The case where at least one of points $x_1$ and $x_2$ has degree 2. Removing line segment $[v_1v_2]$ splits $\St(S)$ into two subnetworks $\St_1$ and $\St_2$. 
Suppose that one of them has at least two entering points (say, $\St_1$). Note that $\St_1 \setminus [x_1v_1]$ is a full pseudo-network, so by Lemma~\ref{St_l} it is a tripod; 
denote by $V$ the branching point of the tripod, and by $U_1$, $U_2$ the entering points in such a way that $M_r$ contains points $Y_1$, $U_1$, $U_2$, $Y_2$ in the mentioned order (some points may coincide). 
Then vector $\overrightarrow{U_1V}$ is directed away from line $Y_1Y_2$, hence vector $\overrightarrow{v_1v_2}$ also is directed away from line $Y_1Y_2$.

Summing up, no subtree can contain three entering points, and subtrees can not both contain two entering points simultaneously, which finishes the proof. 

\end{itemize}

\end{itemize}\qed

\section{Derivation in the picture}\label{diff}

Consider a point $y \in M$ such that $B_r(y) \cap \Sigma = \emptyset$. Suppose there exists an energetic point $x \in \partial B_r(y) \setminus M_r$. 
Our goal is to determine how the length of $\Sigma$ in the vicinity of point $x$ changes with the infinitesimal movement of $y$ along $M$ (and the corresponding movement of $x$). We are going to consider all possible options for the local structure of $\Sigma$ in the vicinity of $x$. Since radius of curvature of $M$ is greater than $r$, each $x$ corresponds to no more than two distinct $y$. Additionally, the degree of $x$ is either 1 or 2. Therefore, there are 4 cases to consider.

In all cases below we are going to move point $y$ along $M$ a distance $\varepsilon$ in such direction that the length of the arc covered by point $x$ increases (it changes a minimizer in a neighborhood of $x$).
The substitution of negative $\varepsilon$ corresponds to moving $y$ along $M$ in the opposite direction.

\paragraph{Case 1.} The degree of point $x$ is 1 (so $x$ is the end of some segment $[zx] \subset \Sigma$) and $y(x)$ is unique (look at the left half of Fig.~\ref{case12}). Points $z$, $x$, and $y(x)$ lie on one line by Lemma~\ref{stepen1}.
Let $|zx| = l$, let $\alpha$ be the angle between $(zy(x)]$ and $M$. We obtain the point $y(x_\varepsilon)$ by moving $y(x)$ a sufficiently small distance $\varepsilon$ along $M$; let $x_\varepsilon := [zy(x_\varepsilon)] \cap \partial B_r(y(x_\varepsilon))$. $M$ is smooth, so the distance between point $y(x_\varepsilon)$ and the tangent to $M$ at point $y(x)$ is $o(\varepsilon)$. 
By cosine rule for triangle $zy(x)y(x_\varepsilon)$,
\[
|zy(x_\varepsilon)| = \sqrt{|zy(x)|^2 + 2|zy(x)|\varepsilon \cos \alpha + \varepsilon^2} + o(\varepsilon) = |zy(x)| + \varepsilon \cos\alpha + o(\varepsilon).
\]
Therefore the derivative of the length of $\Sigma$ in the vicinity of $x$ with respect to the movement of $y(x)$ along $M$ in this case is $\cos\alpha$.

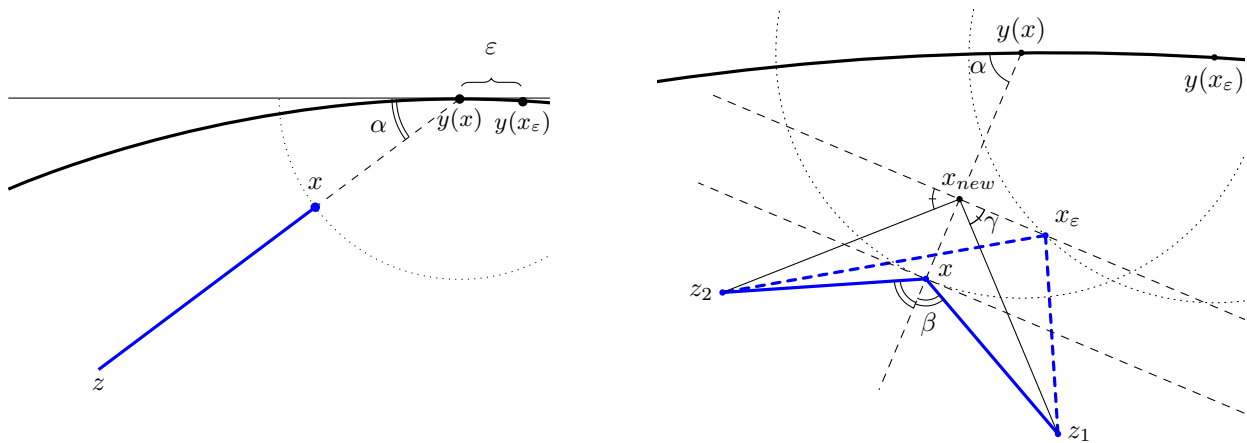
\begin{figure}[h]
    \centering
    \hfill
    \begin{tikzpicture}[scale=1.2]
    \def\eps{\varepsilon}
    
    \clip(-5,-3) rectangle (1,2);
    
    \draw[very thick] plot [smooth, tension=1] coordinates
        {(-5, 0) (0, 1) (5, 0)};
        
    \draw (-5, 1.01) -- (5, 1.01);
    \draw[dotted] (-2,1) arc (180:360:2);
    
    \draw[dashed] (0, 1) -- (-1.6, -.2);
    \draw[very thick, blue] (-4, -2) -- (-1.6, -.2);
    
    
    \draw (-.7, 1) arc (180 : 216.87 : 0.7);
    \draw (-.75, 1) arc (180 : 216.87 : 0.75);
    
    \node[left] at (-.7, 0.7) {$\alpha$}; 
    
        
    \draw [decorate , decoration={brace,mirror, amplitude=3pt},      
            xshift=0pt,yshift=4pt]
        (0.68, 1) -- (0.02, 1) node [midway,yshift=0.5cm] {$\eps$};
    
    \fill (0,1) circle (1.5pt)
        node[below] {{\small $y(x)$}};
        
    \fill[blue] (-1.6, -.2) circle (1.5pt);
    \node[above] at (-1.6, -.1) {$x$};
    
    \fill[blue] (-4, -2) circle (0pt) 
        node[below, black] {$z$};
        
    \fill (0.7, .97) circle (1.5pt)
        node[below] {{\small $y(x_\varepsilon)$}};

\end{tikzpicture}
\hfill
\begin{tikzpicture}[line cap=round,line join=round,>=triangle 45,x=1cm,y=1cm,scale=1.2]
\clip(-4.5, -.8) rectangle (2., 4.2);

\draw [very thick, shift={(-0.12513485800546506,-26.677521852562677)}]  plot[domain=1.3015317814364376:1.81024140245113,variable=\t]({1*30.392666753240377*cos(\t r)+0*30.392666753240377*sin(\t r)},{0*30.392666753240377*cos(\t r)+1*30.392666753240377*sin(\t r)});

\draw [dotted] (1.6662008271652229,3.6623085496751884) circle (2.7202892533708605cm);
\draw [dotted] (-0.4727808370220571,3.7131565644584974) circle (2.7202892533708605cm);

\draw [very thick, blue] (-3.783717507507988,1.056029328176248)-- (-1.530653918438071,1.2069885402389755);
\draw [very thick, blue] (-1.530653918438071,1.2069885402389755)-- (-0.06721948175656989,-0.5127334924261615);
\draw [very thick, blue, dashed] (-0.20613901407786517,1.6889061221660897)-- (-0.06721948175656989,-0.5127334924261615);
\draw [very thick, blue, dashed] (-0.20613901407786517,1.6889061221660897)-- (-3.783717507507988,1.056029328176248);
\draw [] (-1.1576901280692398,2.0905632733845843)-- (-0.06721948175656989,-0.5127334924261615);
\draw [] (-1.1576901280692398,2.0905632733845843)-- (-3.783717507507988,1.056029328176248);

\draw [shift={(-1.1576901280692398,2.0905632733845843)}] (-67.27213315390254:0.3003568321075131) arc (-67.27213315390254:-22.884987621529085:0.3003568321075131);
\draw[] (-0.9721088011312526,1.9044723323699622) -- (-0.9190855648632562,1.8513034920800702);

\draw [shift={(-1.530653918438071,1.2069885402389755)}] (-112.88498762152909:0.3003568321075131) arc (-112.88498762152909:-49.60316378273691:0.3003568321075131);

\draw [shift={(-1.530653918438071,1.2069885402389755)}] (-112.88498762152909:0.23527951848421863) arc (-112.88498762152909:-49.60316378273691:0.23527951848421863);

\draw [shift={(-1.530653918438071,1.2069885402389755)}] (-176.16681146032127:0.35041630412543195) arc (-176.16681146032127:-112.88498762152908:0.35041630412543195);

\draw [shift={(-1.530653918438071,1.2069885402389755)}] (-176.16681146032127:0.28533899050213746) arc (-176.16681146032127:-112.88498762152908:0.28533899050213746);

\draw [shift={(-1.1576901280692398,2.0905632733845843)}] (-67.27213315390254:0.3003568321075131) arc (-67.27213315390254:-22.884987621529085:0.3003568321075131);

\draw [shift={(-0.4727808370220571,3.7131565644584974)}] (-179.34460895105994:0.35041630412543195) arc (-179.34460895105994:-112.88498762152909:0.35041630412543195) ;

\draw [shift={(-1.1576901280692398,2.0905632733845843)}] (157.1150123784709:0.3003568321075131) arc (157.1150123784709:201.50215791084437:0.3003568321075131);

\draw [dashed] (-0.4727808370220571,3.7131565644584974)-- (-2.0483888830006287,-0.019558231771048984);
\draw [dashed] (-3.8848543096703176,3.2417205575080885)-- (2.4644358241962365,0.5616356414899564);
\draw [dashed] (2.3181285497377746,-0.41761257587217493)-- (-4.04973666644738,2.270313027354491);
\draw[] (-1.4204832205471947,2.093734673965504) -- (-1.4955669612551816,2.094640788417195);
\fill [blue] (-1.530653918438071,1.2069885402389755) circle (1pt);
\node[black, right] at (-1.5 ,1.3) {$x$};
\fill [blue] (-0.06721948175656989,-0.5127334924261615) circle (1pt)
    node[black, right] {$z_{1}$};
\fill [blue] (-3.783717507507988,1.056029328176248) circle (1pt)
    node[black, left] {$z_{2}$};
\fill [black] (-0.4727808370220571,3.7131565644584974) circle (1pt)
    node[above] {$y(x)$};
\fill [black] (1.6662008271652229,3.6623085496751884) circle (1pt)
    node[below] {$y(x_{\varepsilon})$};
\fill [blue] (-0.20613901407786517,1.6889061221660897) circle (1pt)
    node[black, above right] {$x_\varepsilon$};
\fill [black] (-1.1576901280692398,2.0905632733845843) circle (1pt);
\fill [white] (-1.1,2.25) circle (3pt);
\node[above] at (-1.05,2.1) {$x_{new}$};
\draw[] (-0.8,1.8) node {$\gamma$};
\draw[] (-0.95,3.5547777778793734) node {$\alpha$};
\draw[] (-1.5, 0.7) node {$\beta$};
\end{tikzpicture}
\hfill\phantom{}
    \caption{The first and second cases}
    \label{case12}
\end{figure}

\paragraph{Case 2.} The degree of point $x$ is 2 (so $x$ is the end of some segments $[z_1x], [xz_2] \subset \Sigma$) and $y(x)$ is unique (look at the right half of Fig.~\ref{case12}). The ray $[y(x)x)$ contains the bisector of the angle $z_1xz_2$ by Lemma~\ref{stepen2}. 
Let $|z_1x| = |z_2x| = l$ (we can shorten one of the segments if needed), $\beta = \frac12\angle z_1xz_2$; let the angle between the bisector of the angle $z_1xz_2$ and $M$ be $\alpha$; $y(x_\varepsilon)$ is obtained by moving $y(x)$ along $M$ a distance $\varepsilon$ (which is chosen to be sufficiently small after fixing $l$). Let $x_\varepsilon$ be such point on $\partial B_r(y(x_\varepsilon))$, that $[y(x_\varepsilon)x_\varepsilon)$ contains the bisector of the angle $z_1x_\varepsilon z_2$.

Consider the tangent to $B_r(y(x))$ at point $x$, and the parallel line going through point $x_\varepsilon$. Let point $x_{new}$ be the intersection between the last line and $[y(x)x)$. Note that 
$|z_1x_{new}| = |z_2x_{new}|$; denote this length as $l_{new}$. This equality also implies that $\angle z_1x_{new}x_\varepsilon+\angle z_2x_{new}x_\varepsilon=\pi$. 
We denote the angle $z_1x_{new}x_\varepsilon$ as $\gamma$ and write the cosine rule for triangles $z_1x_{new}x_\varepsilon$ and $z_2x_{new}x_\varepsilon$ using the fact that $|xx_{new}| = O(\eps)$:
\[
    |z_1x_\varepsilon| = 
    \sqrt{
        l_{new}^2 + |xx_{new}|^2 -
        2l_{new} |xx_{new}|\cos\gamma
    } = 
    l_{new} - |xx_{new}|\cos \gamma + o(\eps),
\]
\[
    |z_2x_\varepsilon| = 
    \sqrt{
        l_{new}^2 + |xx_{new}|^2 +
        2l_{new} |xx_{new}|\cos\gamma
    } = 
    l_{new} + |xx_{new}|\cos \gamma + o(\eps).
\]
Thus,
\begin{equation}\label{case2_1}
    |z_1x_\varepsilon| + |z_2x_\varepsilon| 
    - 2l_{new} = o(\varepsilon).
\end{equation}
 
Note that since $M$ is smooth,
\[
    |x_{new}x| = \varepsilon\cos\alpha + o(\varepsilon).
\]
Finally, we write the cosine rule for the triangle $z_1xx_{new}$:
\begin{equation}\label{case2_2}
    l_{new} = 
    \sqrt{
        l^2 + 
        (\varepsilon\cos\alpha + o(\varepsilon))^2 + 
        2 l(\varepsilon\cos\alpha + o(\varepsilon)) \cos\beta 
    } =
    l + \varepsilon\cos\alpha\cos\beta + o(\varepsilon).
\end{equation}
Combining~\eqref{case2_1} and~\eqref{case2_2}, we conclude that the derivative is
\[
   2\cos\alpha\cos\beta.
\]

\paragraph{In cases 3 and 4} $x$ corresponds to two points: $y_1(x)$ and $y_2(x)$. 
Let $y_1 (x) = y_1(x_\varepsilon)$, and let the point $y_2(x_\varepsilon)$ be obtained from $y_2(x)$ by moving it along $M$ a (possibly negative) distance $\varepsilon$. 
This uniquely determines the point $x_\varepsilon := \partial B_r(y_1(x)) \cap \partial B_r(y_2(x_\varepsilon))\cap N$. Let us find this point explicitly (look at the left half of Fig.~\ref{case34}).

The triangle $xy_1(x)y_2(x)$ is isosceles with two sides of length $r$; let $\angle xy_1(x)y_2(x) = \angle xy_2(x)y_1(x) =: \alpha$, $\angle x_\varepsilon y_1(x)y_2(x_\varepsilon ) = \angle x_\varepsilon y_2(x_\varepsilon )y_1(x) =: \alpha_\varepsilon$. 

We define the following coordinate system: the middle point of the segment $y_1(x)y_2(x)$ is the origin $O$; the $x$ axis is collinear to the ray
$[y_1(x)y_2(x))$; the $y$ axis is collinear to the ray $[Ox)$. Then
\[
O = (0,0), \quad x = (0, r\sin\alpha), \quad y_1(x) = (-r\cos\alpha,0), \quad y_2(x) = (r\cos\alpha,0).
\]
Denote the angle between $y_1(x)y_2(x)$ and $M$ as $\delta$. Then
\[
y_{2}(x_\varepsilon) = (r\cos\alpha + \varepsilon\cos\delta +o(\varepsilon), \varepsilon\sin\delta + o(\varepsilon)).
\]
Therefore, by the cosine rule for triangle $y_1(x)y_2(x)y_2(x_\varepsilon)$,
\[
|y_{1}(x)y_{2}(x_\varepsilon)| = \sqrt{(2r\cos\alpha + \varepsilon\cos\delta+o(\varepsilon))^2 + (\varepsilon\sin\delta+o(\varepsilon))^2} = 
2r\cos\alpha + \varepsilon \cos\delta  + o(\varepsilon).
\]
Let $O_\varepsilon$ be the middle point of segment $[y_{1}(x_\varepsilon)y_{2}(x_\varepsilon)]$. Then
\[
O_\varepsilon = \left(\frac{\varepsilon\cos\delta}{2}+o(\varepsilon),\frac{\varepsilon\sin\delta}{2}+o(\varepsilon)\right).
\]
By the definition of the cosine function,
\[
\alpha_\varepsilon = \arccos\left(\frac{|y_1(x)O_\varepsilon|}{r}\right) = \arccos\left(\cos\alpha + \frac{\varepsilon \cos\delta}{2r}  + o(\varepsilon) \right) = 
\alpha - \frac{\cos\delta}{2r\sin\alpha} \varepsilon + o(\varepsilon).
\]
Let $\Delta$ be the directed angle $\angle y_2(x)y_1(x)y_2(x_\varepsilon)$ (so $\Delta < 0$ when $\varepsilon$ is negative). 
By the sine rule for the triangle $y_2(x)y_1(x)y_2(x_\varepsilon)$,
\[
\frac{\varepsilon}{\sin \Delta} = \frac{|y_1(x)y_2(x)|}{\sin (\delta - \Delta + o(\varepsilon))} \geq |y_1(x)y_2(x)|, \quad \mbox {so} \quad \Delta = O(\varepsilon).
\]
Therefore,
\[
\Delta = \sin \Delta + o(\varepsilon) = \frac{ \varepsilon \sin (\delta + O(\varepsilon))}{|y_1(x)y_2(x)|}  = \frac{\varepsilon\sin\delta}{2r\cos\alpha} + o(\varepsilon).
\]
Writing out the sum of angles in the isosceles triangle $xy_1(x)x_\varepsilon$, we get
\[
\angle xy_1(x)x_\varepsilon = \alpha - \alpha_\varepsilon - \Delta = \left(\frac{\cos\delta}{2r\sin\alpha} - \frac{\sin\delta}{2r\cos\alpha}\right)\varepsilon + o(\varepsilon) = \frac{\cos(\alpha + \delta)}{r\sin(2\alpha)}\varepsilon + o(\varepsilon).
\] 
It follows that
\[
|xx_\varepsilon| = 2r\sin \frac{\angle xy_1(x)x_\varepsilon}{2} = \frac{\cos(\alpha + \delta)}{\sin(2\alpha)}\varepsilon + o(\varepsilon),
\]
and the angle between the segment $xx_\varepsilon$ and the $x$ axis (look at the right half of Fig.~\ref{case34}) is
\[
\pi - \alpha - \frac{\pi - \angle xy_1(x)x_\varepsilon}{2} = \frac{\pi}{2} - \alpha + \frac{\cos(\alpha + \delta)}{2r\sin(2\alpha)}\varepsilon + o(\varepsilon) = \frac{\pi}{2} - \alpha + o(1).
\]

\input{pictures/eng/case34}

\paragraph{Case 3.} The degree of point $x$ is 1 (so $x$ is the end of some segment $[zx] \subset \Sigma$) a there are two distinct points $y_1(x)$ and $y_2(x)$.   

Let $\beta$ be the angle between $[zx]$ and the $x$ axis (look at the right half of Fig.~\ref{case34}). Then
\[
\angle zxx_\varepsilon = \frac{3\pi}{2} - \alpha - \beta + o(1).
\]
By the cosine rule for the triangle $zxx_\varepsilon$,
\[
|zx_\varepsilon| = \sqrt{|zx|^2 - 2|xx_\varepsilon||zx|\cos\angle zxx_\varepsilon + |xx_\varepsilon|^2}  =  |zx| -|xx_\varepsilon|\cos\angle zxx_\varepsilon + o(\varepsilon) = 
|zx| + \frac{\cos(\alpha + \delta)\sin(\alpha+\beta)}{\sin(2\alpha)}\varepsilon + o(\varepsilon).
\]
So the derivative is equal to
\[
\frac{\cos(\alpha + \delta)\sin(\alpha+\beta)}{\sin(2\alpha)}.
\]

\paragraph{Case 4.} The degree of point $x$ is 2 (so it is the end of some segments $[z_1x], [xz_2] \subset \Sigma$) and there are two distinct points $y_1(x)$ and $y_2(x)$.   
Similar to the previous case, the derivative is equal to
\[
\frac{\cos(\alpha + \delta)}{\sin(2\alpha)} (\sin(\alpha+\beta) + \sin(\alpha+\gamma)),
\]
where $\beta$ and $\gamma$ are the angles between the $x$ axis and the segments $[z_1x]$ and $[z_2x]$, respectively.
 
\paragraph{Transitions between the cases.} Note that the second case can transform into the first case, and the other way around; similarly, the third case can turn into the fourth and vice versa.
The value of the derivative does not change in such transitions because
\[
2\cos \beta \cos \alpha = \cos \alpha \mbox{ when } \beta = \pi/3;
\]
\[
 \left(\sin(\alpha+\beta) + \sin(\alpha+\gamma)\right) \frac{\cos(\alpha + \delta)}{\sin(2\alpha)} = 
 2\sin \left(\frac{2\alpha+\beta+\gamma}{2}\right)\cos\left(\frac{\beta - \gamma}{2}\right)\frac{\cos(\alpha + \delta)}{\sin(2\alpha)} = \sin \left(\alpha+\beta+ \frac{\pi}{3}\right)\frac{\cos(\alpha + \delta)}{\sin(2\alpha)}
\]
when $\gamma - \beta = 2\pi/3$.
That means that even if the combinatorial structure of $\Sigma$ changes after moving $y$ along $M$ at the point $y_0$, the left and right derivatives at $y_0$ coincide.

\begin{proposition}
Let $x \in \Sigma$ be an energetic point, $y(x) \in M$ be an arbitrary corresponding point. 
Then the derivative of length of $\Sigma$ in a neighborhood of $x$ in the moving $y$ along $M$ is nonnegative.
\label{nonnegative}
\end{proposition}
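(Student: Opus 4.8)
The plan is to argue by contradiction: if the derivative of the length of $\Sigma$ in a neighborhood of $x$ (with respect to the motion of $y(x)$ along $M$ in the direction that increases the covered arc) were strictly negative, then we could produce a competitor set $\Sigma'$ that is connected, still satisfies $F_M(\Sigma') \le r$, and is strictly shorter than $\Sigma$, contradicting minimality. The four explicit derivative formulas computed above (Cases 1--4) give us exactly the local change of $\H(\Sigma)$ near $x$; the point is to check that no \emph{other} part of $\Sigma$ needs to be lengthened to restore feasibility, and that the modification keeps $\Sigma$ connected and acyclic.

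First I would set up the competitor precisely. Move $y(x)$ a small distance $\varepsilon>0$ along $M$ (in the prescribed direction), producing $y(x_\varepsilon)$, and replace the relevant terminal segment(s) of $\Sigma$ at $x$ by the corresponding segment(s) ending at $x_\varepsilon$, as in the constructions of Cases 1--4; call the result $\Sigma_\varepsilon$. By construction $\Sigma_\varepsilon$ is connected (we only moved an endpoint, dragging the incident edges with it) and acyclic, and $\H(\Sigma_\varepsilon) - \H(\Sigma) = D\cdot\varepsilon + o(\varepsilon)$, where $D$ is the derivative from the appropriate case. The feasibility check is the heart of the matter: we must verify $\dist(z,\Sigma_\varepsilon)\le r$ for every $z\in M$. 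For points of $M$ far from $y(x)$ this is automatic since $\Sigma_\varepsilon$ agrees with $\Sigma$ away from $x$ and the change is $O(\varepsilon)$, while such $z$ are covered with slack bounded below by a constant (here one uses that the radius of curvature exceeds $r$, and the structural results of Section~\ref{struct}, so that only a bounded arc $q$ around $y(x)$ is "critically" covered by the piece of $\Sigma$ near $x$). For $z$ in that critical arc, the construction of $x_\varepsilon$ was arranged precisely so that $x_\varepsilon$ still lies on $\partial B_r(y(x_\varepsilon))$ (Cases 1--2) or on $\partial B_r(y_1(x))\cap\partial B_r(y_2(x_\varepsilon))$ (Cases 3--4); one then checks, using smoothness of $M$ and $o(\varepsilon)$ estimates, that the whole arc between the old and new corresponding points stays within distance $r+o(\varepsilon)$ — and the genuine $o(\varepsilon)$ deficit can be absorbed, or the argument run with the exact (non-linearized) construction so that feasibility is exact.

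Granting feasibility, minimality of $\Sigma$ forces $\H(\Sigma_\varepsilon)\ge\H(\Sigma)$ for all small $\varepsilon>0$, hence $D\ge 0$; since $D$ is exactly the derivative named in the statement, this proves nonnegativity. Finally, the choice of corresponding point $y(x)$ does not matter: if $x$ has two corresponding points $y_1(x),y_2(x)$, then we are in Case 3 or 4, and the formula there already depends on both; and the "transitions between the cases" paragraph shows that when the combinatorial type of $\Sigma$ near $x$ changes (degree $1\leftrightarrow 2$), the one-sided derivatives agree, so the derivative is well-defined regardless.

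The main obstacle I anticipate is the feasibility verification: ensuring that pushing $y(x)$ (and moving $x$ to $x_\varepsilon$) does not uncover some point of $M$ on the boundary arc, which requires controlling the $o(\varepsilon)$ terms uniformly and invoking the curvature hypothesis $R>r$ together with the structural description of connected components of $B_r(M)\cap\Sigma$ from the previous section. A secondary subtlety is checking that the modification does not force a topological change elsewhere (e.g.\ creating a cycle or disconnecting $\Sigma$) — this is handled by noting we only perturb terminal edges at an energetic vertex and that, by Lemma~\ref{3points}, the local picture at $x$ is a controlled piece of a local Steiner tree.
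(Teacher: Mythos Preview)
Your proposal is correct and follows the same contradiction strategy as the paper. The paper's own proof, however, is two sentences: assume the derivative is negative, shift $y$ along $M$, observe that $\Sigma$ stays connected and still covers $M$, contradiction. The feasibility check you devote most of your effort to is essentially built into the setup of Section~\ref{diff}: the direction of motion of $y$ was chosen from the outset so that the arc of $M$ covered by (the neighborhood of) $x$ \emph{increases}, and the point $x_\varepsilon$ was defined to lie exactly on $\partial B_r(y(x_\varepsilon))$ (or on $\partial B_r(y_1)\cap\partial B_r(y_2(x_\varepsilon))$ in Cases~3--4), so no $o(\varepsilon)$ deficit needs to be absorbed. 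Your more careful discussion is not wrong, but the curvature hypothesis and the structural results of Section~\ref{struct} are not actually needed here; the paper treats connectedness and coverage as immediate from the construction.
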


\begin{proof}
Suppose the contrary. Then one may shift $y$ along $M$ and the length of $\Sigma$ will strictly decrease. Note that $\Sigma$ is still connected and $M$ is still covered by $\Sigma$; which is a contradiction.
\end{proof}

\begin{proposition}
Let $y \in M$ be a point such that $B_r(y) \cap \Sigma = \emptyset$ and $\partial{B_r(y)}$ contains energetic points $x_1$ and $x_2$.
Define $Y = \partial B_r(y) \cap M_r$. Then
\begin{itemize}
    \item [(i)] points $x_1$ and $x_2$ lie on the opposite sides of the line $(yY)$;
    \item [(ii)] derivative of length of $\Sigma$ in neighborhoods of $x_1$ and $x_2$ in the moving $y$ along $M$ are equal.
\end{itemize}
\label{diffproposition}
\end{proposition}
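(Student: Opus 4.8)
\emph{Proof proposal.} The plan is to first locate $x_1$ and $x_2$ on the circle $\partial B_r(y)$, and then to run a two–sided version of the competitor argument used in Proposition~\ref{nonnegative}, now moving $y$ and simultaneously re–routing $\Sigma$ near both $x_1$ and $x_2$.

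I would set coordinates with $y$ at the origin, the tangent line to $M$ at $y$ horizontal, and $N$ (hence $\Sigma$) in the upper half–plane; then $Y=(0,r)$ and $(yY)$ is the vertical axis (the inner normal to $M$ at $y$). Since the horizontal axis supports $N$ and $y\notin\Sigma$ (as $y\in B_r(y)$, which misses $\Sigma$), in fact $\Sigma$ lies in the \emph{open} upper half–plane, so $x_1,x_2$ lie on the open upper semicircle of $\partial B_r(y)$. Writing $M$ near $y$ as a $C^{1,1}$ graph $s\mapsto(s,\tfrac12\kappa s^2+o(s^2))$ over its tangent and denoting by $y(s)$ the point of $M$ at signed arclength $s$, one gets $|x-y(s)|^2=r^2-2rs\cos\theta+o(s)$ for $x=(r\cos\theta,r\sin\theta)\in\partial B_r(y)$. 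Hence a point in the open right (resp. left) quarter–arc of $\partial B_r(y)$ cut off by $Y$ covers a one–sided neighbourhood of $y$ in $M$ to the right (resp. to the left), and covers no point of $M$ immediately on the other side.

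For part~(i) I would show that each of these two open quarter–arcs contains at most one energetic point and that no energetic point of $\partial B_r(y)$ equals $Y$; together with the previous paragraph this forces $x_1$ and $x_2$ onto strictly opposite sides of $(yY)$. Suppose $x_1,x_2$ both lay, say, in the right quarter–arc. Then each $x_i$ covers a single arc $\{y(s):0\le s\le b_i\}$ of $M$ (a single arc, because a disk of radius $r$ fits strictly inside $N$ — whose inradius exceeds $r$ — and so cannot enclose a whole "neck" of $N$), with $y$ an endpoint of it. Using the definition of an energetic point together with the basic property that $x_i$ has a corresponding point, one locates the essential coverage of $x_i$ inside this arc; using Lemma~\ref{3points} and $\Sigma\cap B_r(y)=\emptyset$ (the segment(s) of $\Sigma$ at $x_i$ leave the ball, so $x_i$ covers nothing of $M$ just to the left of $y$), one then checks that the two arcs are nested, so that one of $x_1,x_2$ is entirely redundant — contradicting its being energetic. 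The boundary case $x_i=Y$ (the only point of $(yY)\cap\partial B_r(y)$ lying in $N$, since the antipode $-Y\notin N$) is excluded along the same lines: an energetic $Y\in\Sigma$ forces $\Sigma$ to run along $M_r$ near $Y$, and that arc of $M_r$ already covers $y$ from \emph{both} sides at distance exactly $r$, leaving no essential coverage near $y$ for a second energetic point corresponding to $y$.

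For part~(ii), by~(i) we may assume $x_1$ covers the arc of $M$ on one side of $y$ and $x_2$ the arc on the other side, so $y=y(x_1)=y(x_2)$ is the common endpoint of the two covered arcs. Let $D_1,D_2$ be the length–derivatives of $\Sigma$ near $x_1$, near $x_2$, computed as in Cases~1–4 for moving $y(x_i)$ along $M$ in the direction that \emph{enlarges} $x_i$'s covered arc. Now move $y$ a distance $\varepsilon>0$ along $M$ and splice into $\Sigma$, in small disjoint neighbourhoods of $x_1$ and of $x_2$, the local configurations produced by the Case~1–4 analysis "as if" the common corresponding point had moved. Because the two covered arcs lie on opposite sides of $y$, exactly one of the two modified arcs grows and the other shrinks by the same $\varepsilon$–amount, so they still tile a neighbourhood of $y$ (with the new seam at the shifted point); the rest of $M$ stays covered by the unchanged part of $\Sigma$, and $\Sigma$ remains connected and acyclic. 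The length change of such a competitor is $(D_1-D_2)\varepsilon+o(\varepsilon)$ for one choice of the direction of the shift and $(D_2-D_1)\varepsilon+o(\varepsilon)$ for the other (the sign flips because "growing" and "shrinking" swap roles); minimality of $\Sigma$, exactly as in the proof of Proposition~\ref{nonnegative}, then gives $D_1-D_2\ge0$ and $D_2-D_1\ge0$, hence $D_1=D_2$.

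I expect part~(i) to be the genuine obstacle: the nestedness/redundancy step is delicate when an energetic point has degree $2$ or two corresponding points (its incident segments need not point radially away from $y$), and the degenerate case $x_i=Y$ has to be treated with care — this is presumably where one really needs the structural results of Section~\ref{struct} and/or the transitions discussed in Section~\ref{diff}. Once part~(i) is available, part~(ii) is a routine first–variation argument; the only verifications are that the spliced set still covers $M$ and is connected, and that the $o(\varepsilon)$ contributions of the two neighbourhoods add up to $o(\varepsilon)$, both of which are immediate from the Case~1–4 computations.
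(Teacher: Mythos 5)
Your proposal is correct and follows essentially the same route as the paper: part~(i) by arguing that two energetic points on the same side of $(yY)$ would make one of them redundant (its local coverage of $M$ already lying in the $r$-ball of the other, contradicting energeticity), and part~(ii) by the two-sided first-variation argument, shifting $y$ in both directions and invoking minimality exactly as in Proposition~\ref{nonnegative}. The paper's proof of~(i) is a two-line version of your nestedness argument, so your extra elaboration (quarter-arcs, the case $x_i=Y$) is not wrong, just more than the paper spends.
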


\begin{proof} 
Suppose the contrary to item~(i); without loss of generality, $\angle Yyx_1 > \angle Yyx_2$. Then
\[
B_r\left(B_\rho (x_1) \cap \Sigma \right) \cap M \subset B_r(x_2),
\]
where $\rho > 0$ is small enough. Thus $x_1$ is not energetic, which is a contradiction.

Now suppose the contrary to item~(ii). Without loss of generality, the derivative of the length of $\Sigma$ in a neighborhood of $x_1$ 
is bigger than the derivative in a neighborhood of $x_2$. Then after a shifting of $y$ along $M$ from $x_2$ to $x_1$ the length of $\Sigma$ strictly decreases. Note that $\Sigma$ is still connected and $M$ is still covered by $\Sigma$; this gives a contradiction.
\end{proof}

\section{Applications and open problems}
\label{rest}

\begin{itemize}
    \item Sometimes it is possible to ``derive in the picture'' in the case of a partially smooth $M$. 
    For this purpose one has to clarify the behavior of a considered competitor in a neighborhood of $B_r(y)$, with $y$ lying in the smooth part of $M$.

For instance we use an analog of Statement~\ref{diffproposition} during the pruning of cases in the proof of Theorem~\ref{rectangleT}. 
    
    \item Miranda, Paolini and Stepanov~\cite{miranda2006one} conjectured that all the minimizers for a circumference of radius $R > r$ are horseshoes. Theorem~\ref{horseshoeT} proves this conjecture with assumption $R > 4.98r$; for $4.98r \geq R > r$ the conjecture remains open.

    \item At the same time, the statement of Theorem~\ref{horseshoeT} for general $M$ needs an assumption on the minimal radius of curvature as we show below.
 
 Define a \textit{stadium} as the boundary of the $R$-neighborhood of a segment. By the definition, stadium has the minimal radius of curvature $R$. 
 If $R < 1.75r$ and a stadium is long enough, then there is a connected set $\Sigma'$ that has smaller length than an arbitrary horseshoe and covers $M$. 
 
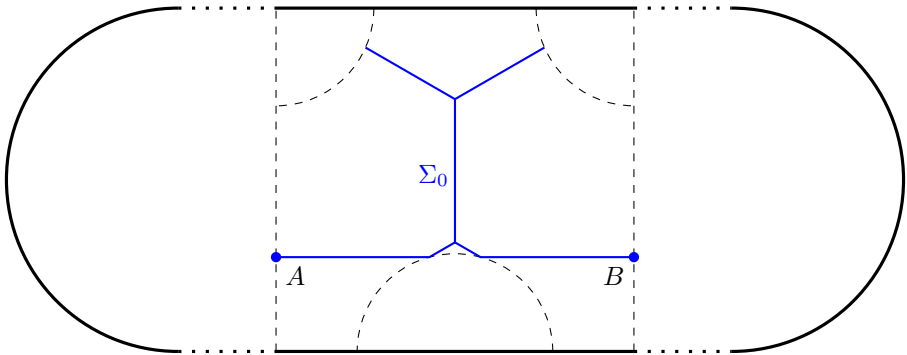
\begin{figure}[h]
    \centering

\begin{tikzpicture}[scale=1.3]

\foreach \y in {0, 3.51389977019889} {
    \draw[very thick, loosely dotted] 
        (-1, \y) -- (0, \y);
    \draw[very thick]
        (0, \y) -- (3.66, \y);
    \draw[very thick, loosely dotted] 
        (3.66, \y) -- (4.66, \y);
}
\draw[very thick] (-1, 0) arc (270:90:1.757);
\draw[very thick] (4.66, 0) arc (-90:90:1.757);

\draw[thin, dashed] (1.8298959646083486 + 1, 0.0) arc (0:180:1);

\draw[blue, thick] 
(0.0, 0.9652268503449002) -- 
(1.5684822553785844, 0.9652268503449002) --
(1.8298959646083486, 1.11615412573856) --
(2.0913096738381127, 0.9652268503449002) -- 
(3.659791929216697, 0.9652268503449002);

\node[blue] at (1.61,1.8) {$\Sigma_0$};

\draw[dashed] (0.0, 0.0) -- (0.0, 3.51389977019889);
\draw[dashed] (3.659791929216697, 0.0) -- (3.659791929216697, 3.51389977019889);

\fill[blue] (0.0, 0.9652268503449002) 
    circle (1.5pt) node[black, below right] {$A$};
\fill[blue] (3.659791929216697, 0.9652268503449002) 
    circle (1.5pt) node[black, below left] {$B$};

\draw[blue, thick] 
(1.8298959646083486, 1.11615412573856) -- 
(1.8298959646083486, 2.58208251632886);

\draw[thin, dashed] (1, 3.51389977019889) arc (0:-90:1);
\draw[thin, dashed] (2.66, 3.51389977019889) arc (180:270:1);

\draw[blue, thick]
(1.8298959646083486, 2.58208251632886) -- 
(0.9149479823041743, 3.11032798020668);

\draw[blue, thick] 
(1.8298959646083486, 2.58208251632886) -- 
(2.744843946912523, 3.11032798020668);

\end{tikzpicture}

    \caption{Horseshoe is not a minimizer for long enough stadium with $R < 1.75r$.}
    \label{stadion}
\end{figure}

Define $\Sigma_0$ as a locally Steiner tree depicted in Fig.~\ref{stadion}. 
Let $\Sigma'$ consist of copies of $\Sigma_0$, glued at points $A$ and $B$ along the length of the stadium. 
In the case $R < 1.75r$ the length of $\Sigma_0$ is strictly smaller than $2|AB|$. Thus for long enough stadium $\Sigma'$ has length $cL + O(1)$, where $L$ is the length of the stadium and $c < 2$ is a constant depend on $\Sigma_0$ and $R$. Obviously, any horseshoe has length $2L + O(1)$. 

This example leads to the following problems.

\begin{problem}
Find the minimal $c$ such that Theorem~\ref{horseshoeT} holds with the replacement of $5r$ with $cr$.
\end{problem}
 
\begin{problem}
Find the set of minimizers for a given stadium.
\end{problem}
 
\end{itemize}

\paragraph{Acknowledgments.} This work was supported by the Russian Science Foundation grant 16-11-10039.
The authors are grateful to Fedor Petrov for an analysis class.

\bibliographystyle{plain}
\bibliography{main}

\end{document}